\newtheorem{prop}{Proposition}[section]
\newtheorem{thm}{Theorem}[section]
\newtheorem{lem}{Lemma}[section]
\newtheorem{cor}{Corollary}[section]
\theoremstyle{definition}
\newtheorem{rem}{Remark}[section]
\newtheorem{ex}{Example}[section]
\newcommand{\interior}{\operatorname{int}}
\newcommand{\frob}{\mathcal{P}}
\newcommand{\bl}{\bar{\ell}_d}
\newcommand{\n}{\mathbb{N}}
\renewcommand {\>}{\right\rangle}  
\newcommand {\norma}[1]{\left\|#1\right\|}
\newcommand{\pr}{\mathbb{P}}
\newcommand{\ww}{\mathcal{W}}
\newcommand{\pp}{\mathcal{P}}
\newcommand{\bj}[1]{\mathbf{j}_{#1}}
\newcommand{\bs}[1]{\mathbf{t}_{#1}}
\newcommand{\bt}[1]{\boldsymbol{\theta}_{#1}}
\newcommand{\bbj}[1]{\hat{\mathbf{j}}_{#1}}
\newcommand{\bbs}[1]{\hat{\mathbf{t}}_{#1}}
\newcommand{\bbt}[1]{\hat{\boldsymbol{\theta}}_{#1}}
\newcommand{\bbbj}[1]{\bar{\mathbf{j}}_{#1}}
\newcommand{\bbbs}[1]{\bar{\mathbf{t}}_{#1}}
\newcommand{\bbbt}[1]{\bar{\boldsymbol{\theta}}_{#1}}
\newcommand{\supp}{\operatorname{supp}}
\numberwithin{equation}{section}
\title{On absolute continuity of invariant measures associated with a piecewise-deterministic Markov processes with random switching between flows}
\author{Dawid Czapla, Katarzyna Horbacz and Hanna Wojew\'odka-\'Sci\k{a}\.zko}
\date{}
\begin{document}
\maketitle
\begin{abstract}
We are concerned with the absolute continuity of stationary distributions corresponding to some~piecewise deterministic Markov process, being typically encountered in biological models. The process under investigation involves a deterministic motion punctuated by random jumps, occurring at the jump times of a Poisson process. The post-jump locations are obtained via random transformations of the pre-jump states. Between the jumps, the motion is governed by continuous semiflows, which are switched directly after the jumps. The main goal of this paper is to provide a set of verifiable conditions implying that any invariant distribution of the process under consideration that corresponds to an ergodic invariant measure of the Markov chain given by its post-jump locations has a density with respect to the Lebesgue measure.
\end{abstract}
%The main goal of this paper is to provide a verifiable criterion invariant measure of the process possesses a
%Lebesgue density 
%involved, among others, in biological modelling
%The process evolves according to the deterministic flow
{\small
\noindent
\textbf {MSC 2010:} Primary: 	60J25, 60G30; Secondary: 60J35, 60J05, 37A25\\
\textbf{Keywords:} Piecewise deterministic Markov process; Invariant measure; Absolute continuity; Singularity; Ergodicity; Switching semiflows. \\
}
\vspace{-0.1cm}
\section*{Introduction}
The object of our study is a subclass of piecewise-deterministic Markov processes (PDMPs), somewhat similar to that considered in \cite{b:bakhtin, b:benaim2, b:benaim1, b:costa, b:traple, b:locherbach}, which plays an important role in biology, providing a mathematical framework for the analysis of gene expression dynamics (cf. \cite{b:mackey_tyran, b:rudnicki_tyran}). Recall that a Markov process may be regarded as belonging to the class of PDMPs whenever, roughly speaking, its randomness stems only from the jump mechanism and, in particular, it admits no diffusive dynamics. This huge class of processes has been introduced by Davis \cite{b:davis}, and arises naturally in many applied areas, such as population dynamics 
\cite{b:benaim_popul, b:cloez_popul}, neuronal activity \cite{b:pakdaman}, excitable membranes \cite{b:riedler}, storage modelling \cite{b:boxma} or internet traffic \cite{b:graham}.

The process considered in this paper is an instance of that introduced in \cite{b:czapla_erg}, and further examined in \cite{b:cz_kub} (cf. also \cite{b:czapla_lil_cont, b:czapla_lil, b:czapla_clt}). More specifically, we study a Markov process $\{(Y(t),\xi(t))\}_{t\geq 0}$ evolving on $Y\times I$, where~$Y$ is a closed subset of $\mathbb{R}^d$ (but not necessarily bounded, in contrast to e.g. \cite{b:benaim1}), and $I$ is a finite set. It is assumed that the process involves a deterministic motion punctuated by random jumps, appearing at random moments $\tau_1<\tau_2<\ldots$, coinciding with the jump times of a homogeneous Poisson process. The underlying random dynamical system can be described in terms of a finite collection $\{S_i:\,i\in I\}$ of semiflows, acting from $[0,\infty)\times Y$ to~$Y$, and an arbitrary family $\{w_{\theta}:\,\theta\in \Theta\}$ of transformations from $Y$ into itself. In the main part of the paper, we assume that $\Theta$ is either an interval in $\mathbb{R}$ or a finite set. Between any two consecutive jumps, the evolution of the first coordinate $Y(\cdot)$ is driven by a semiflow $S_i$, where $i$ is the value of the second coordinate $\xi(\cdot)$. The latter is constant on each time interval between jumps and it is randomly changed right after the jump, depending on the current states of both coordinates. Moreover, the post-jump location of the first coordinate after the $n$th jump, i.e. $Y(\tau_n)$, is obtained as a result of transforming the pre-jump state $Y(\tau_n-)$, using a map $w_{\theta}$, where the index $\theta$ is randomly drawn from $\Theta$, depending on this state. It is worth noting here that such transformations are not present e.g. in the models discussed in \cite{b:bakhtin,  b:benaim2, b:benaim1, b:costa}, where the jumps are only related to the semiflow changes. Consequently, the first coordinate of the process can be shortly expressed as
$$
Y(t)=
\begin{cases}
S_{\xi(t)}(t-\tau_n,Y(\tau_n)) &\mbox{for  } t\in [\tau_n,\tau_{n+1}),\;n\in\mathbb{N}_0,\\
w_{\theta_{n+1}}(Y(\tau_{n+1}-))  &\mbox{for } t=\tau_{n+1}, \;n\in\mathbb{N}_0,
\end{cases}
$$
where $\tau_0=0$, and $\{\theta_n\}_{n\in\n}$ is an appropriate sequence of random variables with values in $\Theta$. In our study, a significant role will be also played by the discrete-time Markov chain $\{(Y_n,\xi_n)\}_{n\in\n}$ defined by
$$Y_n:=Y(\tau_n),\;\;\xi_n:=\xi(\tau_n)\quad\text{for}\quad n\in\n_0,$$
to which we will further refer as to the chain given by the post-jump locations.

In \cite[Theorem 4.1]{b:czapla_erg} (cf. also \cite{b:cz_kub}), we have provided a set of tractable conditions implying that the chain $\{(Y_n,\xi_n)\}_{n\in\n}$ is geometrically ergodic in the the Fortet--Mourier metric (also known as the dual-bounded Lipisithz distance; see \cite{b:las_frac}), which induces the topology of weak convergence of probability measures (see \cite{b:dudley}). This means that the chain possesses a unique, and thus ergodic, stationary distribution, and, for any initial state, the distribution of the chain (at consecutive time points) converges weakly to the stationary one at a geometric rate with respect to the above-mentioned distance. Moreover, we have established a one-to-one correspondence between invariant distributions of that chain and those of the process $\{(Y(t),\xi(t))\}_{t\geq 0}$ (see \hbox{\cite[Theorem 4.4]{b:czapla_erg}}). This has led us to the conclusion that the aforementioned conditions guarantee the existence and uniqueness of a stationary distribution for the PDMP as well. Although not relevant here, it is worth mentioning that the aforesaid results are valid in a more general setting than the one given above; namely, it is enough to require that $Y$ is a Polish metric space, and $\Theta$ is an arbitrary topological measurable space endowed with a finite measure.

The main goal of the present paper is to provide certain verifiable conditions that would imply the absolute continuity of all the stationary distributions of the PDMP $\{(Y(t),\xi(t)\}_{t\geq 0}$ which correspond to ergodic stationary distributions of the chain $\{(Y_n,\xi_n)\}_{n\in\n}$ (see Theorem \ref{main:1}). The~absolute continuity is understood here to hold with respect to the product measure $\bl$ of the \hbox{$d$-dimensional} Lebesgue measure and the counting measure on $I$. As we shall see in Theorem~\ref{cor:2}(ii), the problem reduces, in fact, to examining the invariant distributions of the Markov chain given by the post-jump locations. 

Simultaneously, it should be emphasized that the hypotheses of the above-mentioned\linebreak \cite[Theorem 4.4]{b:czapla_erg} do not ensure that the unique (and thus ergodic) stationary distribution of the chain $\{(Y_n,\xi_n)\}_{n\in\n}$ (or that of the continuous-time process) is absolutely continuous. The simplest example illustrating this claim is a system including only one transformation $w_1\equiv 0$, for which the Dirac measure at $0$ is a unique stationary distribution. 

On the other hand, it is well known and not hard to prove that, whenever the transition operator of a Markov chain preserves the absolute continuity of measures, then any ergodic stationary distribution of the chain (or, in other words, any ergodic invariant probability measure of the transition operator) must be either singular or absolutely continuous (see \cite[Lemma 2.2 with Remark 2.1]{b:las_ac} and cf. \cite[Theorem 6]{b:bakhtin}). As~will be clarified later (in Lemma~\ref{prop:1}), this is the case for the chain $\{(Y_n,\xi_n)\}_{n\in\n}$ if, for instance, all the transformations $w_{\theta}$ and $S_i(t,\cdot)$ are non-singuar with respect to the Lebesgue measure.  Yet, as shown in Example \ref{ex:2}, even under this assumption, the conditions imposed in~\cite{b:czapla_erg} do not guarantee that a unique invariant distribution of the chain and, thus, that of the PDMP, is absolutely continuous. It should be also stressed that, in general, the singularity of some of the transformations $w_{\theta}$ does not necessarily exclude the absolute continuity of invariant measures as well (see e.g. \cite{b:locherbach}).

Obviously, the above-mentioned absolute continuity/singularity dichotomy significantly simplifies the analysis, since, in such a setting, we only need to guarantee that the continuous part of a given ergodic invariant  distribution of $\{(Y_n,\xi_n)\}_{n\in\n}$, say $\mu_*$, is non-trivial. One way to achieve this is to provide the existence of an open $\bl$-small set (in the sense of \cite{b:meyn}) that is uniformly accessible from some measurable subset of $Y\times I$ with positive measure~$\mu_*$ in a specified number of steps \hbox{(see Proposition~\ref{prop:prop2})}.

Following ideas of \cite{b:benaim1}, we show (in Lemma~\ref{lem:rank}) that the existence of an open small set, including a given point $(y_0,j_0)$, can be accomplished by assuming that, for some $n\geq d$ and certain ``admissible'' paths $(j_1,\ldots,j_{n-1})\in I^{n-1}$, $(\theta_1,\ldots,\theta_n)\in\Theta^n$, the composition $$(0,\infty)^n\ni(t_1,\ldots,t_n)\mapsto w_{\theta_n}(S_{j_{n-1}}(t_n,\ldots w_{\theta_1}(S_{j_0}(t_1,y_0))\ldots))$$
has at least one regular point (at which it is a submersion). This requirement is similar in nature to that employed e.g. in \hbox{\cite{b:bakhtin, b:benaim1, b:rudnicki_tyran}}, involving the so-called cumulative flows, which can be usually checked by using a H\"{o}rmander's type condition (see \cite[Theorems 4 and 5]{b:bakhtin}). Furthermore, if~the chain is asymptotically stable, i.e., it admits a unique invariant probability measure to which the distribution of the chain converges weakly, \hbox{independently} of the initial state (which is the case, e.g., under the hypotheses employed in~\cite{b:czapla_erg}), and $(y_0,j_0)$ belongs to the support of~$\mu_*$, then the Portmanteau theorem \hbox{(\cite[Theorem 2.1]{b:bil})} ensures that every open neighbourhood of $(y_0,j_0)$ is uniformly accessible from some other (sufficiently small) neighbourhood of this point with positive measure~$\mu_*$ in a given number of steps (cf. Corollary~\ref{cor:3}). In~general, the latter may, however, be difficult to verify directly, and the argument works only if the chain is asymptotically stable. Therefore, we also propose a more practical condition ensuring the accessibility (cf. Lemma \ref{lem:3}), which concerns the above-specified compositions of $w_{\theta}$ and $S_j$.

Finally, let us drawn attention to the special case where  $S_i(t,y):=y$ for every $i\in I$ (which is, however, out of the scope of this paper). In this case, we have $Y_{n+1}=w_{\theta_{n+1}}(Y_n)$ for any $n\in\n_0$, and thus $\{Y_n\}_{n\in\n_0}$ can be viewed as~a random iterated function system (IFS in short) with place-dependent probabilities (also called a learning system; cf. \cite{b:las_frac, b:las_low, b:szarek_ifs}). The results in \cite{b:szarek_generic} (cf. also \cite{b:las_ac}) show that for most (in the sense of Baire category) such systems the corresponding invariant measures are singular, at least in the case where $\Theta$ is finite and $Y$ is a compact convex subset of $\mathbb{R}^d$. More precisely, it has been proved that asymptotically stable IFSs with singular invariant measures constitute a residual subset of the family of all Lipschitzian IFSs enjoying some additional property that somehow links the Lipschitz constants of $w_{\theta}$ with the associated probabilities.

The outline of the paper is as follows. In Section \ref{sec:1}, we introduce notation and basic definitions regarding Markov operators acting on measures, as well as we give a proof of the aforementioned result regarding the absolute continuity/singularity dichotomy for their ergodic invariant measures. \hbox{Section}~\ref{sec:2} provides a detailed description of the model under study. The main results are established in Section \ref{sec:4}, which is divided into two parts. Section~\ref{sec:41} contains an interpretation of the dichotomy criterion in the given framework and a significant conclusion on the mutual dependence between the absolute continuity of stationary distributions of the chain given by the post-jump locations and the corresponding invariant distributions of the PDMP. Here we also state a  general key observation, linking the absolute continuity of the ergodic invariant distributions of $\{(Y_n,\xi_n)\}_{n\in\n}$ with the existence of a suitable open $\bl$-small set. Further, in Section \ref{sec:42}, we provide some testable conditions implying the existence of such a set and, therefore, guaranteeing the absolute continuity of the invariant measures under consideration.  \hbox{Section} \ref{sec:3} contains the statement of \hbox{\cite[Theorem 4.1]{b:czapla_erg}}, providing the exponential ergodicity of the chain $\{(Y_n,\xi_n)\}_{n\in\n}$ (and hence the existence and uniqueness of a stationary distribution for the PDMP). Some remarks and examples related to our main result are given in Section~\ref{sec:5}.

%% MIara skonczona na theta!!
%It may, however, be difficult to verify directly, without additional information regarding the invariant measure itself.

\section{Preliminaries}\label{sec:1}
Let $(E,\rho)$ be an arbitrary separable metric space, endowed with the Borel $\sigma$-field $\mathcal{B}(E)$. Further, let $\mathcal{M}_{fin}(E)$ be the set of all finite non-negative Borel measures on $E$, and let $\mathcal{M}_{prob}(E)$ stand for the subset of $\mathcal{M}_{fin}(E)$ consisting of all probability measures. Moreover, by $\mathcal{M}^1_{prob}(E)$ we will denote the set of all measures $\mu\in \mathcal{M}_{prob}(E)$ with finite first moment, i.e. satisfying
$$\int_E \rho(x,x^*)\,\mu(dx)<\infty\quad \text{for some}\quad x^*\in E.$$

Now, suppose that we are given a $\sigma$-finite Borel measure $m$ on $E$. Then, a $\sigma$-finite Borel measure $\mu$ on $E$ is  called \emph{absolutely continuous} with respect to $m$, which is denoted by $\mu\ll m$, whenever
$$\mu(A)=0\quad\text{for any}\quad A\in \mathcal{B}(E)\quad\text{such that}\quad m(A)=0.$$
Let $\mathcal{L}^1(E,m)$ denote the space of all Borel measurable and $m$-integrable functions from $E$ to $\mathbb{R}$, identified, as usual, with the corresponding quotient space under the relation of $m$-a.e. equality. Then, by the Radon-Nikodym theorem, $\mu\ll m$  can be equivalently characterized by saying that there is a unique function $f^{\mu}\in\mathcal{L}^1(E,m)$, usually denoted by $d\mu/dm$, such that
$$\mu(A)=\int_A f^{\mu}(x)\,m(dx),\;\;\quad A\in\mathcal{B}(E).$$

The measure $\mu$~is said to be \emph{singular} with respect to $m$, which is denoted by $\mu\perp m$, if there exists a set $F\in\mathcal{B}(E)$ such that 
$$\mu(F)=0\quad\text{and}\quad m(E\backslash F)=0.$$

It is well-known that, due to the Lebesgue decomposition theorem, any $\sigma$-finite Borel measure $\mu$ can be uniquely decomposed as
$$\mu=\mu_{ac}+\mu_s,\quad\text{so that}\quad \mu_{ac}\ll m\quad\text{and}\quad \mu_s\perp m.$$
With regard to the definitions given above, we will use the following notation:
\begin{gather*}
\mathcal{M}_{ac}(E,m):=\{\mu\in\mathcal{M}_{fin}(E):\,\mu\ll m\},\\
\mathcal{M}_{sig}(E,m):=\{\mu\in\mathcal{M}_{fin}(E):\,\mu\perp m\}.
\end{gather*}

Let us now briefly recall the concept of \emph{Frobenius-Perron operator}, which will be used in the analysis that follows. For this aim, suppose that we are given a Borel measurable transformation $S:E\to E$ that is \emph{non-singular} with respect to $m$, i.e.
$$m(S^{-1}(A))=0\quad\text{for any}\quad A\in\mathcal{B}(E)\quad\text{satisfying}\quad m(A)=0.$$
The non-singularity condition assures that, if $\mu\in\mathcal{M}_{ac}(E,m)$, and $\mu_S$ is defined by
$$\mu_S(A):=\mu(S^{-1}(A))\quad\text{for any}\quad A\in\mathcal{B}(E),$$
then $\mu_S\in\mathcal{M}_{ac}(E,m)$. This observation allows one to define a non-negative linear operator $\frob_S: \mathcal{L}^1(E,m)\to \mathcal{L}^1(E,m)$ in such a way that 
$$\frob_S\left(\frac{d\mu}{dm}\right)=\frac{d\mu_S}{dm}\quad\text{for any}\quad\mu\in\mathcal{M}_{ac}(E,m),$$
which, in other words, means that
\begin{equation}
\label{e:frob}
\int_A \frob_S f(x)\,m(dx)=\int_{S^{-1}(A)}f(x)\,m(dx)\quad\text{for any}\quad A\in\mathcal{B}(E),\,f\in\mathcal{L}^1(E,m).\end{equation}
Such an operator $\frob_S$ is commonly known as a Frobenius--Perron operator.

Now, we shall recall several basic definitions from the theory of Markov operators, which will be used throughout the paper. A function $P:E\times\mathcal{B}(E)\rightarrow \left[0,1\right]$ is called a \emph{stochastic kernel} if for each \hbox{$A\in\mathcal{B}(E)$}, \hbox{$x\mapsto P(x,A)$} is a measurable map on $E$, and for each $x\in E$, $A\mapsto P(x,A)$ is a probability Borel measure on $\mathcal{B}(E)$. For any given stochastic kernel $P$, we can consider the corresponding operator \hbox{$P:\mathcal{M}_{fin}(E)\to\mathcal{M}_{fin}(E)$}, acting on measures, given by
\begin{equation} \label{regp} P\mu(A)=\int_{X} P(x,A)\,\mu(dx)\;\;\;\mbox{for}\;\;\; \mu\in\mathcal{M}_{fin}(E),\;A\in \mathcal{B}(E).\end{equation}
Such an operator is usually called \emph{a regular Markov operator}. For notational simplicity, we use here the same symbol for the stochastic kernel and the corresponding Markov operator. This slight abuse of notation will not, however, lead to any confusion.

We say that the operator $P$ is \emph{Feller} (or that it enjoys the \emph{Feller property}) whenever the map~$x\mapsto \<f,P\delta_x\>$ is continuous for any bounded continuous function $f:E\to \mathbb{R}$.

A measure $\mu_*\in\mathcal{M}_{fin}(E)$ is called \emph{invariant} for the Markov operator $P$ (or, simply, $P$-invariant) if $P\mu_*=\mu_*$. If there exists a unique $P$-invariant measure $\mu_*\in\mathcal{M}_{prob}(E)$ such that, for any \hbox{$\mu\in\mathcal{M}_{prob}(E)$}, the sequence $\{P^n\mu\}_{n\in\n}$ is weakly convergent to $\mu_*$, then the operator $P$ is said to be \emph{asymptotically stable}. Let us recall here that a sequence $\{\mu_n\}_{n\in\n}\subset\mathcal{M}_{fin}(X)$ is said to be \emph{weakly convergent} to $\mu\in \mathcal{M}_{fin}(X)$ whenever $$\int_X f\,d\mu_n \to \int_X f\,d\mu,\;\; \text{as}\;\; n\to \infty,$$ for any bounded continuous function $f:E\to\mathbb{R}$

\begin{rem}\label{rem:asymp}
Suppose that $P$ is a regular Markov--Feller operator, and that there exists a measure $\mu_*\in\mathcal{M}_{prob}(E)$ such that $\{P\delta_x\}_{n\in\n}$ is weakly convergent to $\mu_*$ for any $x\in E$. Then $P$ is asymptotically stable.
\end{rem}
\begin{proof}
First of all, note that, due to the Feller property, $P:\mathcal{M}_{prob}(E)\to \mathcal{M}_{prob}(E)$ is continuous in the topology of weak convergence of measures. Taking this into account, we infer that
$$P\mu_*=P(\lim_{n\to\infty} P^n \delta_x) = \lim_{n\to\infty} P^{n+1} \delta_x=\mu_*\quad \text{(with any $x\in E$)},$$
which shows that $\mu_*$ is $P$-invariant. Moreover, using the assumption of the weak convergence of $\{P^n \delta_x\}_{n\in\n}$ (for any $x\in E$) and the Lebesgue's dominated convergence theorem we can simply conclude that $\{P^n \mu\}_{n\in\n}$ converges weakly to $\mu_*$ for any $\mu\in\mathcal{M}_{prob}(E)$. This, in turn, proves that~$\mu_*$ is a unique invariant probability measure for $P$.
\end{proof}

An invariant probability measure $\mu_*\in\mathcal{M}_{prob}(E)$ is said to be \emph{ergodic} with respect to $P$ (or $P$-ergodic) whenever $\mu_*(A)\in\{0,1\}$ for any $A\in\mathcal{B}(E)$ satisfying
$$P(x,A)=1 \quad\text{for}\quad \mu_*\text{ - a.e.}\quad x\in A.$$

It is well-known (see e.g. \cite[Corollary 7.17]{b:douc}) that, if $\mu_*$ is a unique invariant probability measure for $P$, then it must be ergodic. Moreover, according to \cite[Theorem 19.25]{b:aliprantis}, the $P$-ergodic measures are precisely the extreme points of the set of all $P$-invariant probability measures.

\begin{rem}\label{rem1}
If $\mu_*$ is an ergodic invariant measure of $P$, then it cannot be a sum of two distinct non-zero \hbox{$P$-invariant} measures. To see this, suppose that $\mu_*=\mu_1+\mu_2$ for certain non-trivial invariant measures $\mu_1,\mu_2\in\mathcal{M}_{fin}(E)$, and let $\alpha_i:=\mu_i(E)$ for $i=1,2$. Then $\alpha_1+\alpha_2=1$, and $\widetilde{\mu}_i:=\mu_i/\alpha_i$, $i=1,2$, are invariant probability measures for $P$. Since $\mu_*=\alpha_1\widetilde{\mu}_1+\alpha_2\widetilde{\mu}_2$, and $\mu_*$ is an extreme point of the set of $P$-invariant probability measures, we deduce that $\mu_1=\mu_2$.
\end{rem}

The foregoing observation leads to a simple, but extremely useful conclusion regarding the dichotomy between absolute continuity and singularity of $P$-ergodic measures, which can be found e.g.~in \cite[Lemma 2.2, Remark 2.1]{b:las_ac}. Here we provide the proof of this result just for the self-containedness of the paper.
\begin{lem}\label{lem:abs}
Suppose that $P:\mathcal{M}_{fin}(E)\to \mathcal{M}_{fin}(E)$ is a regular Markov operator which preserves absolute continuity of measures, i.e. $P(\mathcal{M}_{ac}(E,m))\subset \mathcal{M}_{ac}(E,m)$. Then, every ergodic invariant probability measure of $P$ is either absolutely continuous or singular with respect to $m$. 
\end{lem}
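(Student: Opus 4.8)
The plan is to use the Lebesgue decomposition together with Remark~\ref{rem1} (or, equivalently, the fact that $\mu_*$ is an extreme point of the set of $P$-invariant probability measures). Given a $P$-ergodic probability measure $\mu_*$, I would first write its Lebesgue decomposition with respect to $m$, namely $\mu_* = \mu_{*,ac} + \mu_{*,s}$ with $\mu_{*,ac}\ll m$ and $\mu_{*,s}\perp m$. The goal is to show that one of the two summands must vanish, which is precisely the asserted dichotomy.

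The key step is to verify that \emph{both} $\mu_{*,ac}$ and $\mu_{*,s}$ are themselves $P$-invariant. For the absolutely continuous part this follows from the hypothesis that $P$ preserves absolute continuity: $P\mu_{*,ac}\ll m$, and hence, subtracting from $P\mu_* = \mu_*$, we get $P\mu_{*,s} = \mu_* - P\mu_{*,ac}$, which is a (signed, a~priori) measure. The point is to argue that $P\mu_{*,ac}$ is dominated by $\mu_*$ in the sense that $P\mu_{*,ac}(A)\le \mu_*(A)$ for all Borel $A$; indeed, applying $P$ to the inequality $\mu_{*,ac}\le \mu_*$ (valid since $\mu_{*,s}\ge 0$) and using monotonicity and linearity of the regular Markov operator $P$ on non-negative measures gives $P\mu_{*,ac}\le P\mu_* = \mu_*$. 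Consequently $P\mu_{*,ac}$ and $P\mu_{*,s}$ are genuine non-negative finite measures summing to $\mu_*$, with $P\mu_{*,ac}\ll m$. By the uniqueness of the Lebesgue decomposition, it then suffices to check that $P\mu_{*,s}\perp m$: since $P\mu_{*,s}\le \mu_*$ and, on a set $F$ carrying $\mu_{*,s}$ with $m(E\setminus F)=0$, one has $P\mu_{*,s}(E\setminus F)\le \mu_{*,s}(E\setminus F)$ — hmm, this last containment needs care, so instead I would decompose $P\mu_{*,s}$ itself and use that $(P\mu_{*,s})_{ac}\le (P\mu_{*,s}) \le \mu_*$ forces $(P\mu_{*,s})_{ac}+P\mu_{*,ac}\le \mu_* $ with both sides absolutely continuous, whence by uniqueness of the decomposition $(P\mu_{*,s})_{ac}+P\mu_{*,ac}=\mu_{*,ac}$ and $(P\mu_{*,s})_s = \mu_{*,s}$; comparing total masses gives $(P\mu_{*,s})_{ac}(E)=0$, i.e.\ $P\mu_{*,s}\perp m$. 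Thus $P\mu_{*,ac}=\mu_{*,ac}$ and $P\mu_{*,s}=\mu_{*,s}$, so both parts are $P$-invariant.

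Once invariance of the two parts is established, I would finish by invoking Remark~\ref{rem1}: $\mu_*$ is ergodic, hence an extreme point of the convex set of $P$-invariant probability measures, so it cannot be written as a sum $\mu_{*,ac}+\mu_{*,s}$ of two non-zero $P$-invariant measures unless (after normalising) $\mu_{*,ac}/\mu_{*,ac}(E) = \mu_{*,s}/\mu_{*,s}(E)$; but these would then be a common measure that is simultaneously $\ll m$ and $\perp m$, forcing it to be zero — contradiction. Therefore one of $\mu_{*,ac}$, $\mu_{*,s}$ is the zero measure, which is exactly the statement that $\mu_*$ is either absolutely continuous or singular with respect to $m$.

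I expect the main obstacle to be the bookkeeping in the middle step — cleanly deducing that $P\mu_{*,s}$ is singular and that $P\mu_{*,ac}$ is absolutely continuous with $P\mu_{*,ac}\le\mu_*$, so that the uniqueness of the Lebesgue decomposition applies to $\mu_* = P\mu_{*,ac}+P\mu_{*,s}$. The monotonicity of $P$ on $\mathcal{M}_{fin}(E)$ (immediate from \eqref{regp}, since $P(x,\cdot)\ge 0$) and the hypothesis $P(\mathcal{M}_{ac}(E,m))\subset\mathcal{M}_{ac}(E,m)$ are the only structural inputs needed; the rest is the uniqueness in the Lebesgue decomposition theorem and the extreme-point characterisation of ergodicity already recorded before the statement.
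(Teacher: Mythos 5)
Your proposal is correct and follows essentially the same route as the paper's proof: Lebesgue-decompose $\mu_*$, use the hypothesis $P\mu_{ac}\ll m$ together with uniqueness of the Lebesgue decomposition and conservation of total mass to conclude $(P\mu_s)_{ac}\equiv 0$ and hence that both parts are $P$-invariant, then invoke ergodicity via Remark~\ref{rem1} to force one part to vanish. The monotonicity detour ($P\mu_{ac}\le P\mu_*=\mu_*$) is harmless but unnecessary — the paper simply takes the absolutely continuous parts of both sides of $\mu_*=P\mu_{ac}+P\mu_s$ directly.
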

\begin{proof}
Let $\mu_*\in\mathcal{M}_{prob}(E)$ be an ergodic ergodic $P$-invariant measure. By virtue of the Lebesgue decomposition theorem we can write
\begin{equation}
\label{dec}
\mu_*=\mu_{ac}+\mu_s,
\end{equation}
where $\mu_{ac}\in \mathcal{M}_{ac}(E,m)$ and $\mu_s\in \mathcal{M}_{sig}(E,m)$ are uniquely determined by $\mu_*$. Consequently, it now follows that
$$
P\mu_*=P\mu_{ac}+P\mu_s.
$$
From the principal assumption of the lemma we know that $P\mu_{ac}\in \mathcal{M}_{ac}(E,m)$. Further, using the invariance of $\mu_*$, we also get $\mu_*=P\mu_{ac}+P\mu_s$. Taking the absolutely continuous part of each side of this equality gives
$$\mu_{ac}=P\mu_{ac}+(P\mu_s)_{ac},$$
which, in particular, implies that
$$\mu_{ac}(E)=\mu_{ac}(E)+(P\mu_s)_{ac}(E).$$
Hence $(P\mu_s)_{ac}\equiv 0$, and thus $P\mu_s\in \mathcal{M}_{sig}(E,m)$. From the identity
$$\mu_{ac}+\mu_s=\mu_*=P\mu_{ac}+P\mu_s$$
and the uniqueness of the Lebesgue decomposition it now follows that both measures $\mu_{ac}$ and $\mu_s$ are invariant for $P$. Finally, taking into account \eqref{dec} and the fact that $\mu_{ac}\neq \mu_s$, we can apply Remark \ref{rem1} to conclude that at least one of the measures $\mu_{ac}$, $\mu_s$ must be trivial, which gives the desired conclusion.
\end{proof}
For any given $E$-valued time-homogeneous Markov chain $\{\Phi_n\}_{n\in\n_0}$, defined on some probability space $(\Omega,\mathcal{F},\mathbb{P})$, the stochastic kernel $P(\cdot,*)$ satisfying
$$P(x,A)=\pr(\Phi_{n+1}\in A\,|\,\Phi_n=x)\quad\text{for any}\quad x\in E,\,A\in\mathcal{B}(E),\,n\in\n_0$$
is called a one-step transition law (or a transition probability kernel) of this process. Obviously, in this case, the Markov operator defined by \eqref{regp} describes the evolution of the distributions $\mu_n(\cdot):=\pr(\Phi_n\in\cdot)$, $n\in\n_0$, that is, $\mu_{n}=P\mu_{n-1}$ for any $n\in\n$. In this connection, an invariant probability measure of $P$ is called a stationary distribution of the chain.

A family of regular Markov operators $\{P_t\}_{t\geq 0}$ on $\mathcal{M}_{fin}(E)$, generated accordingly to  \eqref{regp}, is called a regular Markov semigroup whenever it constitutes a semigroup under composition with $P_0=\operatorname{id}$ as the unity element. A measure $\nu_*\in\mathcal{M}_{fin}(X)$ is said to be invariant for such a semigroup if $P_t\,\nu_*=\nu_*$ for any $t\geq 0$.

Analogously to the discrete-time case, by the transition law (or a transition semigroup) of a homogeneous continuous-time Markov process $\{\Phi(t)\}_{t\geq 0}$ we mean the family $\{P_t(\cdot,*)\}_{t\geq 0}$ of stochastic kernels satisfying
$$P_t(x,A)=\pr(\Phi(t+s)\in A\,|\,\Phi(s)=x)\quad\text{for any}\quad x\in E,\,A\in\mathcal{B}(E),\,s,t\geq 0.$$
Since $\{P_t(\cdot,*)\}_{t\geq 0}$ satisfies the Chapman--Kolmogorov equation, the family $\{P_t\}_{t\geq 0}$ of Markov operators generated by such kernels is a Markov semigroup, which describes the evolution of the distributions $\mu(t)(\cdot):=\pr(\Phi(t)\in\cdot)$, $t\geq 0$, i.e. $\mu(s+t)=P_t\mu(s)$ for any $s,t\geq 0$. In this context, an invariant probability measure of $\{P_t\}_{t\geq 0}$ is called a stationary distribution of the process $\{\Phi(t)\}_{t\geq 0}$.

\section{Description of the model} \label{sec:2}
Let us now present a formal description of the investigated model (originating from \cite{b:czapla_erg}), which has already been briefly discussed in the introduction. Recall that such a system can be viewed as a~PDMP evolving through random jumps, which arrive one by one (at random time points~$\tau_n$) in exponentially distributed time intervals. The parameter of the exponential distribution, determining the jump rate, will be denoted by $\lambda$. The deterministic evolution of the process will be governed by a finite number of continuous semiflows, randomly switched at the jump times.

Let $Y$ be a Polish metric space, endowed with the Borel $\sigma$-field $\mathcal{B}(Y)$, and let $\mathbb{R}_+:=[0,\infty)$. Further, suppose that we are given a finite collection $\{S_i:\,i\in I\}$ of continuous semiflows, where $I=\{1,\ldots, N\}$ and $S_i:\mathbb{R}_+\times Y\to Y$ for any $i\in I$. The semiflows will be switched at the jump times according to a matrix $\{\pi_{ij}:\,i,j\in I\}$ of place-dependent continuous probabilities $\pi_{ij}:Y\to[0,1]$, satisfying
$$\sum_{j\in I} \pi_{ij}(y)=1\quad\text{for any} \quad i\in I,\,y\in Y.$$

Moreover, let $\Theta$ is an arbitrary topological space equipped with a finite Borel measure $\vartheta$, and let $\{w_{\theta}:\,\theta\in\Theta\}$ be an arbitrary family of transformations from $Y$ to itself, such that the map $(y,\theta)\mapsto w_{\theta}(y)$ is continuous. These transformations will be related to the post-jump locations of the process; more specifically, if the system is in the state $y$ just before a jump, then its position directly after the jump should be $w_{\theta}(y)$ with some randomly selected $\theta\in\Theta$. The choice of $\theta$ depends on the current state $y$ and is determined by a probability density function $\theta \mapsto p_{\theta}(y)$ such that $(\theta,y)\mapsto p_{\theta}(y)$ is continuous.

The state space of the model under investigation will be $X:=Y\times I$, endowed with the product topology. For any given probability Borel measure $\mu$ on $X$, we first introduce a discrete-time \hbox{$X$-valued} stochastic process $\{\Phi_n\}_{n\in\n_0}$ of the form $\Phi_n=(Y_n,\xi_n)$ with initial distribution $\mu$, defined on a~suitable probability space endowed with a probability measure $\mathbb{P}_{\mu}$, so that
\begin{equation}
\label{e:def_chain}
Y_n=w_{\theta_n}(S_{\xi_{n-1}}(\Delta\tau_n,Y_{n-1}))\quad\text{with}\quad\Delta\tau_n:=\tau_n-\tau_{n-1}\quad\text{for any}\quad n\in\mathbb{N},
\end{equation}
where $\{\tau_n\}_{n\in\n_0}$, $\{\xi_n\}_{n\in\n}$ and $\{\theta_n\}_{n\in\n}$ are the sequences of random variables with values in $\mathbb{R}_+$, $I$ and $\Theta$, respectively, constructed in such~a way that $\tau_0=0$, $\tau_n\to\infty$ (as $n\to\infty$) $\pr_{\mu}$-a.s, and, for every $n\in\n$, we have
\begin{gather*}
\pr_{\mu}(\Delta\tau_n\leq t\,|\;\mathcal{G}_{n-1})=1-e^{-\lambda t}\quad\text{whenever}\quad t\geq 0,\\
\pr_{\mu}(\xi_n=j\,|\,\xi_{n-1}=i,\,Y_{n}=y;\;\mathcal{G}_{n-1})=\pi_{ij}(y)\quad\text{for any}\quad y\in Y,\; i,j\in I,\\
\pr_{\mu}(\theta_n\in D\,|\,S(\Delta\tau_n,Y_{n-1})=y;\,\mathcal{G}_{n-1})=\int_D p_{\theta}(y)\,\vartheta(d\theta)\quad\text{for any}\quad D\in\mathcal{B}(\Theta),\;y\in Y,
\end{gather*}
where $\mathcal{G}_{n-1}$ is the $\sigma$-field generated by the variables $Y_0$, $\tau_1,\ldots,\tau_{n-1}$, $\xi_1,\ldots,\xi_{n-1}$ and $\theta_1,\ldots,\theta_{n-1}$.

Under the assumption that $\xi_n$, $\theta_n$ and $\Delta\tau_n$ are conditionally independent given $\mathcal{G}_{n-1}$ for any $n$, it is easy to check that $\{\Phi_n\}_{n\in\n_0}$ is a time-homogeneous Markov chain with transition probability kernel $P:X\times \mathcal{B}(X)\to [0,1]$ of the form
\begin{align}
\begin{split}
\label{e:kernel}
P((y,i),A)&=\pr_{\mu}(\Phi_{n+1}\in A\,|\,\Phi_n=(y,i))\\
&=\sum_{j\in I}\int_{\Theta}\int_0^{\infty}\lambda e^{-\lambda t}\mathbbm{1}_A(w_{\theta}(S_i(t,y)),j)\pi_{ij}(w_{\theta}(S_i(t,y)))\,p_{\theta}(S_i(t,y))\,dt\,\vartheta(d\theta)
\end{split}
\end{align} 
for $(y,i)\in X$ and $A\in\mathcal{B}(X)$. 

On the same probability space, we can now define an interpolation $\{\Phi(t)\}_{t\geq 0}$ of the chain $\{\Phi_n\}_{n\in\n_0}$ as follows:
$$\Phi(t):=(Y(t),\xi(t))\quad\text{for any}\quad t\geq 0,\vspace{-0.2cm}$$
where
\begin{equation}
\label{def_phi_t}
Y(t):=S_{\xi_n}(t-\tau_n,Y_n)\quad\text{and}\quad\xi(t):=\xi_n,\quad\text{whenever}\quad t\in[\tau_n,\tau_{n+1})\quad\text{for any}\quad n\in\mathbb{N}_0.
\end{equation}
It is easily seen that $\{\Phi(t)\}_{t\geq 0}$ is a time-homogeneous Markov process satisfying $\Phi(\tau_n)=\Phi_n$ for any $n\in\n_0$. By $\{P_t\}_{t\geq 0}$ we will denote the transition semigroup of this process, i.e. 
\begin{equation}
\label{e:kernel_cont}
P_t((y,i),A)=\pr_{\mu}(\Phi(s+t)\in A\,|\,\Phi(s)=(y,i))\quad\text{for any}\quad (y,i)\in X,\;A\in\mathcal{B}(X),\;s,t\geq 0. \vspace{-0.02cm}
\end{equation}

Referring to $P$ and $\{P_t\}_{t\geq 0}$ in our further discussion, we will always mean the Markov operator generated by the transition law of $\{\Phi_n\}_{n\in\n_0}$, given by \eqref{e:kernel}, and the Markov semigroup induced by the transition law of the process $\{\Phi(t)\}_{t\geq 0}$, satisfying \eqref{e:kernel_cont}, respectively. It is worth noting here that, by continuity of functions $S_i$, $w_{\theta}$, $p_{\theta}$ and $\pi_{ij}$, both the operator $P$ and the semigroup $\{P_t\}_{t\geq 0}$ are Feller (cf. \cite[Lemma 6.3]{b:czapla_erg}).

As mentioned in the introduction, a set of directly testable conditions for the existence and uniqueness of $P$-invariant probability measures (which, simultaneously, guarantee a form of geometric ergodicity for $P$) has already been established in \cite[Theorem 4.1]{b:czapla_erg}. This theorem will be quoted in Section \ref{sec:3}. The main results of the paper, concerning the absolute continuity of ergodic invariant measures for $P$, presented in Section \ref{sec:4}, will be derived by assuming a priori that such measures exist.

Let us also recall that, by virtue of \cite[Theorem 4.4]{b:czapla_erg}, there is a one-to-one correspondence between invariant probabability measures of the operator $P$ and those of the semigroup $\{P_t\}_{t\geq 0}$. Moreover, such a correspondence can be expressed explicitly, using the Markov operators $G$ and $W$ induced by the stochastic kernels of the form
\begin{equation}
\label{def:g}
G((y,i),A)=\int_0^{\infty} \lambda e^{-\lambda t} \mathbbm{1}_A(S_i(t,y),i)\,dt,
\end{equation}
\begin{equation}
\label{def:w}
W((y,i),A)=\sum_{j\in I} \int_{\Theta} \mathbbm{1}_A(w_{\theta}(y),j)\pi_{ij}(w_{\theta}(y))p_{\theta}(y)\,\vartheta(d\theta)
\end{equation}
for any $(y,i)\in X$ and $A\in\mathcal{B}(X)$. More precisely, the following holds:
\begin{thm}[\textrm{\cite[Theorem 4.4]{b:czapla_erg}}]\label{thm:one-to-one}
Let $P$ and $\{P_t\}_{t\geq 0}$ denote the Markov operator and the Markov semigroup induced by \eqref{e:kernel} and \eqref{e:kernel_cont}, respectively.
\begin{itemize}
\item[(i)]\phantomsection\label{cnd:1r} If $\mu_*\in\mathcal{M}_{prob}(X)$ is an invariant measure of $P$, then $G\mu_*$ is an invariant  measure of~$\{P_t\}_{t\geq 0}$ and $WG\mu_*  = \mu_*$.
\item[(ii)]\phantomsection\label{cnd:2r} If $\nu_*\in\mathcal{M}_{prob}(X)$ is an invariant measure of $\{P_t\}_{t\geq 0}$, then \hbox{$W\nu_*$} is an invariant  measure of~$P$ and $GW\nu_*=\nu_*$.
 \end{itemize}
\end{thm}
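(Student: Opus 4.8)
The plan is to establish the two assertions by directly manipulating the defining kernels \eqref{e:kernel}, \eqref{def:g}, \eqref{def:w} and the semigroup relation \eqref{e:kernel_cont}, exploiting the structural fact that one jump of the chain $\{\Phi_n\}$ factors as ``run a semiflow for an exponential time, then apply a post-jump transformation with a switch''. Concretely, I would first verify the operator identity $P=WG$ on $\mathcal{M}_{fin}(X)$: applying $G$ to $\delta_{(y,i)}$ averages $\delta_{(S_i(t,y),i)}$ against $\lambda e^{-\lambda t}\,dt$, and then applying $W$ to that measure reproduces exactly the triple sum/integral in \eqref{e:kernel}. This identity, together with the Feller property of all operators involved (noted after \eqref{e:kernel_cont}), is the workhorse for both parts.

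For part (i): suppose $P\mu_*=\mu_*$. I would show $G\mu_*$ is $\{P_t\}$-invariant by relating $P_t$ to $G$ and $W$. The key observation is a ``renewal/Dynkin'' decomposition of $P_t$: starting from a post-jump configuration $(y,i)$ distributed according to $G\mu_*$, at time $t$ either no jump has occurred (probability $e^{-\lambda t}$, and the state is $S_i(t,y)$), or the first jump occurred at some $s\in(0,t]$, after which the process restarts from $W\delta_{(S_i(s,y),i)}$ and evolves for the remaining time $t-s$. Writing this out gives an integral equation $P_t(G\mu_*) = e^{-\lambda t}(\text{shifted }G\mu_*) + \int_0^t \lambda e^{-\lambda s}\,P_{t-s}(W(\cdots))\,ds$; since $G\mu_*$ is by construction the ``stationary age'' distribution of the semiflow piece (it already integrates $S_i(\cdot,y)$ against the exponential law), and since $WG\mu_*=PW G\ldots$— more cleanly, since $\mu_*=P\mu_*=WG\mu_*$ by part of the claim itself — one checks that the right-hand side collapses back to $G\mu_*$. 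The identity $WG\mu_* = \mu_*$ then follows immediately from $P\mu_*=\mu_*$ and $P=WG$.

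For part (ii): suppose $P_t\nu_*=\nu_*$ for all $t\ge 0$. I would first argue $GW\nu_* = \nu_*$. Intuitively $W\nu_*$ is the law of the process sampled at a jump time when the continuous-time process is in equilibrium, and $G$ then reinterpolates by running a fresh exponential stretch of semiflow; the PASTA-type / stationarity argument is that sampling the stationary process at an independent exponential time after a jump gives back the stationary law. Formally I would write $\nu_* = \int_0^\infty \lambda e^{-\lambda t} P_t \nu_*\,dt$ (valid since $P_t\nu_*=\nu_*$), then insert the first-jump decomposition of $P_t$ into the integrand and use Fubini to recognize $\int_0^\infty \lambda e^{-\lambda t} P_t(\cdot)\,dt$ applied to $W\nu_*$ as exactly $G$ acting in the right place, yielding $GW\nu_*=\nu_*$. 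Invariance of $W\nu_*$ under $P=WG$ is then automatic: $P(W\nu_*) = WG(W\nu_*) = W\nu_*$.

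The main obstacle I expect is making the first-jump (renewal) decomposition of $P_t$ rigorous and checking the Fubini interchanges and the boundary terms carefully — in particular handling the ``no jump by time $t$'' term, which involves the pushforward of $\nu_*$ (or $G\nu_*$) under the time-shifted semiflow rather than under $G$ itself, and verifying that this term combines correctly with the integral term. One must also be slightly careful that $G\mu_*$ and $W\nu_*$ lie in $\mathcal{M}_{prob}(X)$ (total mass is preserved since $G,W$ are Markov) and that all the measures have the finite-first-moment regularity if that is needed downstream; but mass preservation is immediate and the moment issue does not enter the invariance computation. Since this is precisely \cite[Theorem 4.4]{b:czapla_erg}, I would ultimately just cite that reference for the detailed verification, using the sketch above as the conceptual skeleton.
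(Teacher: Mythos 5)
First, note that the paper itself contains no proof of this statement: Theorem \ref{thm:one-to-one} is imported verbatim from \cite[Theorem 4.4]{b:czapla_erg}, so there is no internal argument to compare yours against, and your sketch has to stand on its own. Its algebraic skeleton is sound: composing the kernels \eqref{def:g} and \eqref{def:w} and using the semigroup property of the flows does reproduce \eqref{e:kernel}, so $P=W\circ G$ on measures; hence $WG\mu_*=\mu_*$ in (i) is immediate, and in (ii), once $GW\nu_*=\nu_*$ is known, $P(W\nu_*)=W(GW\nu_*)=W\nu_*$ follows. The real content, however, lies in the two semigroup-level claims, and there your sketch has genuine gaps.

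In (ii), the step ``recognize $\int_0^\infty\lambda e^{-\lambda t}P_t(\cdot)\,dt$ as exactly $G$'' is false: $G$ is the exponential resolvent of the jump-free flow, not of the full semigroup. If you insert the first-jump decomposition into $\int_0^\infty\lambda e^{-\lambda t}P_t\nu_*\,dt$ and apply Fubini, you obtain terms weighted by $e^{-2\lambda t}$ together with a nested copy of the same resolvent acting on other measures, and the identity does not close. A route that does work is the \emph{last-jump} decomposition: writing $F_s\nu(A):=\int_X\mathbbm{1}_A(S_i(s,y),i)\,\nu(dy,di)$ for the flow push-forward, constancy of the jump rate gives $P_t\nu=e^{-\lambda t}F_t\nu+\int_0^t\lambda e^{-\lambda s}F_s\,W P_{t-s}\nu\,ds$; using $P_{t-s}\nu_*=\nu_*$ inside the integral and letting $t\to\infty$ (the no-jump term has mass $e^{-\lambda t}\to 0$) yields precisely $\nu_*=\int_0^\infty\lambda e^{-\lambda s}F_s W\nu_*\,ds=GW\nu_*$. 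Similarly, in (i) the first-jump identity for $P_tG\mu_*$ does not ``collapse back'' by itself, because the unknown operators $P_{t-s}$ act on measures other than $G\mu_*$; you need either to iterate it (summing over the number of jumps), or to use the last-jump identity above and check that the constant function $F(u)\equiv G\mu_*$ solves the resulting Volterra equation — the key computation being $e^{-\lambda t}F_tG\mu_*+\int_0^t\lambda e^{-\lambda r}F_r\mu_*\,dr=G\mu_*$, which uses $WG\mu_*=\mu_*$ and the semiflow property — together with a Gronwall-type uniqueness argument in total variation. These are exactly the details carried out in \cite{b:czapla_erg}, which the paper, like you, ultimately cites; but as written, your derivation of (ii) rests on an incorrect identification and your derivation of (i) on an unproved collapse.
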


\section{Main results}\label{sec:4}
Throughout the remainder of the paper, we assume that $Y$ is a closed subset of $\mathbb{R}^d$ (endowed with the Euclidean norm $\|\cdot\|$) such that $\interior Y\neq 0$, and we write $\ell_d$ for the $d$-dimensional Lebesgue measure restricted to $\mathcal{B}(Y)$. Moreover, by $\bl$ we denote the product measure $\ell_d\otimes m_c$ on \hbox{$X=Y\times I$}, where $m_c$ is the counting measure on $I$. The latter can be therefore expressed as $m_c(J)=\sum_{j\in I}\delta_j(J)$ for any $J\subset I$, where $\delta_j$ stands for the Dirac measure at $j$. Our aim is to find conditions ensuring the absolute continuity (with respect to $\bar{\ell}_d$) of ergodic invariant probability measures of the operator $P$, if any exist, and the corresponding invariant measures of the semigroup $\{P_t\}_{t\geq 0}$.

\subsection{Singularity/absolute continuity dichotomy of ergodic $P$-invariant measures} \label{sec:41}
We begin our analysis with a simple observation regarding the case in which the operator $P$ preserves the absolute continuity.
\begin{lem}\label{prop:1}
Suppose that, for any $\theta\in\Theta$, $k\in I$ and $t\geq 0$, the transformations $w_{\theta}$ and $S_k(t,\cdot)$ are non-singular with respect to $\ell_d$. Then the Markov operator $P$ induced by \eqref{e:kernel} satisfies $$P\left(\mathcal{M}_{ac}(X,\bl)\right)\subset\mathcal{M}_{ac}\left(X,\bl\right).$$
\end{lem}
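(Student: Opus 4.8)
The goal is to show that if $\mu \ll \bl$ then $P\mu \ll \bl$, where $P$ is given by \eqref{e:kernel}. The plan is to exploit the factorisation of the kernel that is already visible in the paper, namely that $P = WG$ on the level of kernels in the appropriate sense, or more precisely that the composition $(y,i) \mapsto S_i(t,y) \mapsto w_\theta(\cdot)$ underlies \eqref{e:kernel}. It is cleanest to verify the absolute-continuity-preserving property separately for the ``deterministic flow with exponential holding time'' part and for the ``random transformation'' part, and then compose.

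\emph{Step 1: the operators $G$ and $W$ preserve absolute continuity.} For $G$, defined by \eqref{def:g}, I would fix $A \in \mathcal{B}(X)$ with $\bl(A) = 0$ and write $A = \bigcup_{k\in I}(A_k \times \{k\})$ with $\ell_d(A_k) = 0$. Then
$$
G\mu(A) = \sum_{k\in I}\int_{Y\times\{k\}} \int_0^\infty \lambda e^{-\lambda t}\,\mathbbm{1}_{A_k}\bigl(S_k(t,y)\bigr)\,dt\,\mu(d(y,k)).
$$
The point is that $\mathbbm{1}_{A_k}(S_k(t,y)) = 1$ only if $S_k(t,y) \in A_k$, i.e. $(t,y) \in \{(t,y): S_k(t,y)\in A_k\}$; for each fixed $t$ the slice is $S_k(t,\cdot)^{-1}(A_k)$, which has $\ell_d$-measure zero by the assumed non-singularity of $S_k(t,\cdot)$. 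Hence by Tonelli the inner $dt$-integral vanishes for $\ell_d$-a.e. $y$, and since $\mu(\cdot\times\{k\}) \ll \ell_d$, it vanishes $\mu$-a.e.; so $G\mu(A)=0$. For $W$, defined by \eqref{def:w}, the same argument applies: $\mathbbm{1}_{A_j}(w_\theta(y))=1$ forces $y \in w_\theta^{-1}(A_j)$, which is $\ell_d$-null for each fixed $\theta$ by non-singularity of $w_\theta$, so integrating over $\theta$ and then against $\mu \ll \ell_d$ gives $W\mu(A)=0$. (The weights $\pi_{ij}$, $p_\theta$ are bounded measurable and harmless.)

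\emph{Step 2: identify $P$ with a composition and conclude.} Comparing \eqref{e:kernel} with \eqref{def:g}–\eqref{def:w}, one sees directly that $P((y,i),A) = \int_X W((z,i),A)\,G((y,i),d(z,i))$, i.e. $P\mu = W(G\mu)$ at the kernel level as well — the $t$-integral produces the intermediate state $S_i(t,y)$ (still in fibre $i$), and then $W$ applies the transformation $w_\theta$ and the switch to $j$. This identity should be checked by a routine Fubini/Tonelli computation against indicators. Given it, if $\mu \ll \bl$ then $G\mu \ll \bl$ by Step 1, hence $P\mu = W(G\mu) \ll \bl$ again by Step 1, which is the claim.

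\emph{Main obstacle.} The only non-formal point is making sure the change-of-order-of-integration and the ``for $\ell_d$-a.e. $y$, the $t$-slice is null $\Rightarrow$ the $dt$-integral is null'' argument is airtight: one needs joint measurability of $(t,y)\mapsto S_k(t,y)$ (which holds, since $S_k$ is continuous) so that Tonelli applies to the non-negative integrand $\mathbbm{1}_{A_k}(S_k(t,y))$, and one needs that a set in $[0,\infty)\times Y$ all of whose $t$-sections are $\ell_d$-null has product-measure zero, whence $\int_0^\infty\mathbbm{1}_{A_k}(S_k(t,y))\,dt = 0$ for $\ell_d$-a.e. $y$ — this is exactly Tonelli. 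I expect no serious difficulty; the proof is essentially a bookkeeping exercise built on the non-singularity hypothesis and the kernel factorisation $P = WG$.
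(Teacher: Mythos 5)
Your argument is correct, but it proceeds differently from the paper. The paper does not decompose $P$ at all: for each fixed $(\theta,j,t)$ it considers the map $T_{\theta,j,t}(y,i)=(w_{\theta}(S_i(t,y)),j)$, which is non-singular as a composition of non-singular maps, introduces the associated Frobenius--Perron operator $\frob_{\theta,j,t}$, and then rewrites $P\mu(A)$ so as to exhibit an explicit Radon--Nikodym derivative of $P\mu$, namely $(y,i)\mapsto\sum_{j\in I}\int_{\Theta}\int_0^{\infty}\lambda e^{-\lambda t}\frob_{\theta,j,t}(f^{\mu}_{\theta,j,t})(y,i)\,dt\,\vartheta(d\theta)$; the analogous statements for $G$ and $W$ are then proved separately (Lemma~\ref{prop:2}) by the same device. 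Your route is more elementary: you never construct densities, only check that $\bl$-null sets stay null, using Tonelli and the non-singularity of the $t$- and $\theta$-sections, and you reduce $P$ to the composition $W\circ G$, which indeed holds by a Tonelli computation identical to comparing \eqref{e:kernel} with \eqref{def:g}--\eqref{def:w} (the paper only uses this factorization implicitly through Theorem~\ref{thm:one-to-one}). What your approach buys is economy: Step~1 is literally the content of Lemma~\ref{prop:2}, so the paper's two lemmas come out of one argument, and you avoid the Frobenius--Perron formalism altogether; what it gives up is the explicit formula for $d(P\mu)/d\bl$ that the paper's proof produces (not needed elsewhere in the paper). Two small points to make airtight, both of which you essentially flag: joint measurability of $(t,y)\mapsto S_k(t,y)$ and $(\theta,y)\mapsto w_{\theta}(y)$ (guaranteed by the standing continuity assumptions) so Tonelli applies, and the observation that $\mu\ll\bl$ implies $\mu(\cdot\times\{k\})\ll\ell_d$ for each $k$, which you use when passing from ``zero for $\ell_d$-a.e.\ $y$'' to ``zero $\mu$-a.e.''. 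Alternatively, the same null-set argument applied directly to \eqref{e:kernel} with the composed map $w_{\theta}\circ S_i(t,\cdot)$ would let you skip the factorization $P=WG$ entirely.
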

\begin{proof}
For any $\theta\in\Theta$, $j\in I$ and $t\geq 0$, let us define $T_{\theta,j,t}:X\to X$ by 
$$T_{\theta,j,t}(y,i):=(w_{\theta}(S_i(t,y)),j)\quad\text{for any}\quad (y,i)\in X.$$
Obviously, each of the transformations $T_{\theta,j,t}$ is then Borel measurable and non-singular with respect to $\bl$. Consequently, for any $\theta\in\Theta$, $j\in I$ and $t\geq 0$, we can consider the Frobenius--Perron operator associated with $T_{\theta,j,t}$, say $\frob_{\theta,j,t}$, which satisfies
$$\int_{X} \mathbbm{1}_A(T_{\theta,j,t}(y,i))f(y,i)\,\bl(dy,di)=\int_A \frob_{\theta,j,t} f(y,i)\,\bl(dy,di)\quad\text{for any}\quad A\in\mathcal{B}(X),\,f\in\mathcal{L}^1(X,\bl).$$
Let $\mu\in\mathcal{M}_{ac}(X,\bl)$, and, for any $(\theta,j,t)\in \Theta\times I\times\mathbb{R}_+$, define
$$f_{\theta,j,t}^{\mu}(y,i):=\pi_{ij}(w_{\theta}(S_i(t,y)))p_{\theta}(S_i(t,y))\frac{d\mu}{d\bar{\ell}_d}(y,i),\quad (y,i)\in X.$$
Since $f_{\theta,j,t}^{\mu}\in\mathcal{L}^1(X,\bl)$, we obtain
\begin{align*}
P\mu(A)&=
\sum_{j\in I}\int_X\int_{\Theta}\int_0^{\infty}\lambda e^{-\lambda t}\mathbbm{1}_A(w_{\theta}(S_i(t,y)),j)\pi_{ij}(w_{\theta}(S_i(t,y)))\,p_{\theta}(S_i(t,y))\,dt\,\vartheta(d\theta)\,\mu(dy,di)\\
&=\sum_{j\in I}\int_{\Theta}\int_0^{\infty}\lambda e^{-\lambda t}\left(\int_X\mathbbm{1}_A(w_{\theta}(S_i(t,y)),j)\pi_{ij}(w_{\theta}(S_i(t,y)))\,p_{\theta}(S_i(t,y))\,\mu(dy,di)\right)\,dt\,\vartheta(d\theta)\\
&=\sum_{j\in I}\int_{\Theta}\int_0^{\infty}\lambda e^{-\lambda t}\left( \int_X\mathbbm{1}_A(T_{\theta,j,t}(y,i)) f_{\theta,j,t}^{\mu}(y,i)\,\bl(dy,di)\right)dt\,\vartheta(d\theta)\\
&=\sum_{j\in I}\int_{\Theta}\int_0^{\infty}\lambda e^{-\lambda t} \left(\int_A\frob_{\theta,j,t}\left( f_{\theta,j,t}^{\mu}\right)(y,i)\,\bl(dy,di)\right)dt\,\vartheta(d\theta)\\
&=\int_A\left(\sum_{j\in I}\int_{\Theta}\int_0^{\infty}\lambda e^{-\lambda t}\frob_{\theta,j,t}\left( f_{\theta,j,t}^{\mu}\right)(y,i)\,dt\,\vartheta(d\theta)\right)\,\bl(dy,di)\quad\text{for any}\quad A\in\mathcal{B}(X).
\end{align*}
It now follows that the map
$$X\ni(y,i)\mapsto\sum_{j\in I}\int_{\Theta}\int_0^{\infty}\lambda e^{-\lambda t}\frob_{\theta,j,t}\left( f_{\theta,j,t}^{\mu}\right)(y,i)\,dt\,\vartheta(d\theta)$$
is a Radon-Nikodym derivative of $P\mu$ with respect to $\bl$, whence $P\mu\in\mathcal{M}_{ac}(X,\bl)$.
\end{proof}

%\begin{thm}\label{cor:1}
%Let $P$ be the Markov operator corresponding to \eqref{e:kernel}, and suppose that the \hbox{hypothesis} of Lemma \ref{prop:1} holds. Then every ergodic invariant measure of $P$ is either absolutely continuous or singular with respect to $\bl$. 
%In~particular, if conditions \eqref{e:c0}-\eqref{e:c6} hold, then $P$ has a unique invariant measure (by Theorem~\ref{t:erg_p}), which is either absolutely continuous or singular.
%\end{thm}

Proceeding similarly as in the proof of Lemma \ref{prop:1}, we shall now show that also the Markov operators~$G$ and $W$, corresponding to \eqref{def:g} and \eqref{def:w}, respectively, preserve the absolute continuity of measures, whenever $S_k(t,\cdot)$ and $w_{\theta}$ are non-singular.

\begin{lem}\label{prop:2}
Suppose that the assumption of Lemma \ref{prop:1} is fulfilled. Then the Markov operators $G$ and $W$, generated by \eqref{def:g} and \eqref{def:w}, respectively, satisfy
$$G\left(\mathcal{M}_{ac}(X,\bl)\right)\subset\mathcal{M}_{ac}\left(X,\bl\right)\quad\text{and}\quad W\left(\mathcal{M}_{ac}(X,\bl)\right)\subset\mathcal{M}_{ac}\left(X,\bl\right).$$
\end{lem}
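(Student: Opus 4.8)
The strategy is to mimic the computation used in the proof of Lemma \ref{prop:1}, treating the kernels \eqref{def:g} and \eqref{def:w} separately but in exactly the same spirit: each is an average (over a finite measure space, here either $\mathbb{R}_+$ with the exponential weight, or $\Theta\times I$ with the weight $\pi_{ij}(w_\theta(\cdot))p_\theta(\cdot)$) of ``push-forward'' contributions coming from Borel-measurable non-singular maps, and for each such contribution we can invoke the associated Frobenius--Perron operator. Since a measurable average of Radon--Nikodym densities is again a density (by the Fubini--Tonelli theorem, the weights being bounded and the base measures $\sigma$-finite), absolute continuity of the output follows.

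\textbf{Step 1 (the operator $G$).} For each $i\in I$ and $t\geq 0$, define $U_{i,t}\colon X\to X$ by $U_{i,t}(y,k):=(S_i(t,y),i)$; note this is not quite the right parametrisation since $G$ does not change the second coordinate, so more precisely set $U_{i,t}(y,i):=(S_i(t,y),i)$ on the slice $Y\times\{i\}$ and check measurability and non-singularity of the resulting map $U_t(y,i):=(S_i(t,y),i)$ on all of $X$ — this holds because $S_i(t,\cdot)$ is non-singular with respect to $\ell_d$ by hypothesis, and the counting-measure factor is untouched. Let $\frob^G_{t}$ be the Frobenius--Perron operator of $U_t$. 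Then for $\mu\in\mathcal{M}_{ac}(X,\bl)$ with density $g:=d\mu/d\bl$, a direct rewrite of \eqref{def:g} gives, for any $A\in\mathcal{B}(X)$,
$$
G\mu(A)=\int_0^\infty \lambda e^{-\lambda t}\left(\int_X \mathbbm{1}_A(U_t(y,i))\,g(y,i)\,\bl(dy,di)\right)dt
=\int_A\left(\int_0^\infty \lambda e^{-\lambda t}\,\frob^G_t g\,(y,i)\,dt\right)\bl(dy,di),
$$
where the interchange of integrals is justified by nonnegativity (Tonelli). Hence $(y,i)\mapsto\int_0^\infty \lambda e^{-\lambda t}\frob^G_t g(y,i)\,dt$ is a density for $G\mu$, so $G\mu\in\mathcal{M}_{ac}(X,\bl)$.

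\textbf{Step 2 (the operator $W$).} For each $\theta\in\Theta$ and $j\in I$ define $V_{\theta,j}(y,i):=(w_\theta(y),j)$; this is Borel measurable and non-singular with respect to $\bl$ because $w_\theta$ is non-singular with respect to $\ell_d$. Let $\frob^W_{\theta,j}$ be its Frobenius--Perron operator. Given $\mu\in\mathcal{M}_{ac}(X,\bl)$ with density $g$, put $h_{\theta,j}(y,i):=\pi_{ij}(w_\theta(y))\,p_\theta(y)\,g(y,i)$, which lies in $\mathcal{L}^1(X,\bl)$ since $\pi_{ij}$ and (for fixed $\theta$) $p_\theta$ are bounded; rewriting \eqref{def:w} exactly as in Lemma \ref{prop:1} yields
$$
W\mu(A)=\sum_{j\in I}\int_\Theta\left(\int_X \mathbbm{1}_A(V_{\theta,j}(y,i))\,h_{\theta,j}(y,i)\,\bl(dy,di)\right)\vartheta(d\theta)
=\int_A\left(\sum_{j\in I}\int_\Theta \frob^W_{\theta,j}h_{\theta,j}(y,i)\,\vartheta(d\theta)\right)\bl(dy,di),
$$
again interchanging integrals by Tonelli; so $W\mu$ has a density and $W\mu\in\mathcal{M}_{ac}(X,\bl)$.

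\textbf{Main obstacle.} There is no deep difficulty; the only points requiring a little care are (a) checking measurability in the joint variables $(t,y,i)$ and $(\theta,y,i)$ of the maps $\frob^G_t g$ and $\frob^W_{\theta,j}h_{\theta,j}$ so that the outer integrals make sense and Tonelli applies — this follows from the continuity of $S_i$, $w_\theta$, $p_\theta$, $\pi_{ij}$ together with standard measurability of Frobenius--Perron operators in parameters, exactly as was tacitly used in Lemma \ref{prop:1}; and (b) the integrability of $h_{\theta,j}$, which uses only that $\pi_{ij}\leq 1$ and $p_\theta$ is (pointwise, for fixed $\theta$) finite — in fact, by the definition of a probability density, $\int_\Theta p_\theta(y)\,\vartheta(d\theta)=1$ for each $y$, which is what makes $W$ a Markov operator and keeps the outer sum-integral finite $\bl$-a.e. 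In short, the proof is a routine adaptation of the argument already given for $P$, and we only indicate the necessary modifications.
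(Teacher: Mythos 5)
Your proposal is correct and follows essentially the same route as the paper: the paper likewise introduces the non-singular maps $H_t(y,i)=(S_i(t,y),i)$ and $R_{\theta,j}(y,i)=(w_{\theta}(y),j)$, their Frobenius--Perron operators, the weighted densities $\pi_{ij}(w_{\theta}(y))p_{\theta}(y)\,d\mu/d\bl$ for $W$, and exchanges the order of integration to exhibit an explicit Radon--Nikodym derivative of $G\mu$ and $W\mu$. No substantive differences to report.
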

\begin{proof}
To prove the first inclusion, for any $t\geq 0$, we define $H_t:X\to X$ by setting 
$$H_t(y,i):=(S_i(t,y),i)\quad\text{for any}\quad (y,i)\in X.$$
Such a transformation is then Borel measurable and non-singular with respect to $\bl$. Hence, we can consider the Frobenius--Perron operator associated with $H_t$, say $\frob_t$, which satisfies
$$\int_{X} \mathbbm{1}_A(H_t(y,i))f(y,i)\,\bl(dy,di)=\int_A \frob_{t} f(y,i)\,\bl(dy,di)\;\;\text{for any}\;\; A\in\mathcal{B}(X),\,f\in\mathcal{L}^1(X,\bl).$$
Letting $\mu\in\mathcal{M}_{ac}(X,\bl)$ and putting $h^{\mu}:={d \mu}/{d \bl}$, we then see that, for any $A\in\mathcal{B}(X)$,
\begin{align*}
G\mu(A)&=\int_X\int_0^{\infty} \lambda e^{-\lambda t} \mathbbm{1}_A(S_i(t,y),i)\,dt\,\mu(dy,di)\\
&=\int_0^{\infty} \lambda e^{-\lambda t}\left(\int_X  \mathbbm{1}_A(H_t(y,i))\,h^{\mu}(y,i)\,\bl(dy,di)\right)\,dt\\
&=\int_0^{\infty} \lambda e^{-\lambda t}\left(\int_A \frob_{t} h^{\mu}(y,i)\,\bl(dy,di)\right)\,dt=\int_A\left(\int_0^{\infty} \lambda e^{-\lambda t} \frob_{t} h^{\mu}(y,i)\,dt \right)\bl(dy,di).
\end{align*}
This shows that
$$(y,i)\mapsto \int_0^{\infty} \lambda e^{-\lambda t} \frob_{t} h^{\mu}(y,i)\,dt$$
is a Radon--Nikodym derivative of $G\mu$ with respect to $\bl$, which means that $G\mu\in\mathcal{M}_{ac}(X,\bl)$ and, therefore, shows the first inclusion in the assertion of the lemma. 

The proof of the second inclusion goes similarly. In this case, for any $\theta\in\Theta$ and any $j\in I$, we consider $R_{\theta,j}:X\to X$ given by
$$R_{\theta,j}(y,i)=(w_{\theta}(y),j)\quad\text{for any}\quad (y,i)\in X.$$
Obviously, all the transformations $R_{\theta,j}$ are Borel measurable and non-singular with respect to $\bl$. This observation, as before, enables us to introduce the Frobenius--Perron operator associated with $R_{\theta,j}$, say $\frob_{\theta,j}$, which satisfies
$$\int_{X} \mathbbm{1}_A(R_{\theta,j}(y,i))f(y,i)\,\bl(dy,di)=\int_A \frob_{\theta,j} f(y,i)\,\bl(dy,di)\;\;\text{for any}\;\; A\in\mathcal{B}(X),\,f\in\mathcal{L}^1(X,\bl).$$
Let $\mu\in\mathcal{M}_{ac}(X,\bl)$ and define
$$r^{\mu}_{\theta,j}(y,i):=\pi_{ij}(w_{\theta}(y))p_{\theta}(y)\frac{d \mu}{d \bl}(y,i)\quad\text{for any}\quad (y,i)\in X.$$
Taking into account that $r^{\mu}\in\mathcal{L}^1(X,\bl)$, we now obtain that, for any $A\in\mathcal{B}(X)$,
\begin{align*}
W\mu(A)&= \sum_{j\in I}\int_X \int_{\Theta} \mathbbm{1}_A(w_{\theta}(y),j)\pi_{ij}(w_{\theta}(y))p_{\theta}(y)\,\vartheta(d\theta) \,\mu(dy,di)\\
&=\sum_{j\in I} \int_{\Theta}\left(\int_X \mathbbm{1}_A(w_{\theta}(y),j)\pi_{ij}(w_{\theta}(y))p_{\theta}(y)\,\mu(dy,di)\right)\vartheta(d\theta)\\
&=\sum_{j\in I} \int_{\Theta}\left(\int_X \mathbbm{1}_A(R_{\theta,j}(y,i))r^{\mu}_{\theta,j}(y,i)\,\bl(dy,di)\right)\vartheta(d\theta)\\
&=\sum_{j\in I} \int_{\Theta}\left(\int_A \frob_{\theta,j}(r_{\theta,j}^{\mu})(y,i)\,\bl(dy,di)\right)\vartheta(d\theta)=\int_A\left(\sum_{j\in I}\int_{\Theta}\frob_{\theta,j}(r_{\theta,j}^{\mu})(y,i)\,\vartheta(d\theta) \right)\bl(dy,di).
\end{align*}
Consequently, we now see that the map $$X\ni (y,i)\mapsto \sum_{j\in I}\int_{\Theta}\frob_{\theta,j}(r_{\theta,j}^{\mu})(y,i)\,\vartheta(d\theta)$$
is a Radon-Nikodym derivative of $W\mu$ with respect to $\bl$, which, in turn, gives $W\mu\in\mathcal{M}_{ac}(X,\bl)$ and completes the proof of the lemma.
\end{proof}

Collecting all the results obtained so far, we can state the following theorem:

\begin{thm}\label{cor:2}
Let $P, G, W$ be the Markov operators generated by \eqref{e:kernel}, \eqref{def:g} and \eqref{def:w}, respectively, and let $\{P_t\}_{t\geq 0}$ be the Markov semigroup corresponding to \eqref{e:kernel_cont}. Further, suppose that, for any $\theta\in\Theta$, $k\in I$ and $t\geq 0$, the transformations $w_{\theta}$ and $S_k(t,\cdot)$ are non-singular with respect to the Lebesgue measure $\ell_d$. Then 
\begin{itemize}
\item[(i)] Every ergodic invariant probability measure of $P$ is either absolutely continuous or singular with respect to $\bl$.
\item[(ii)] If $\mu_*, \nu_*\in\mathcal{M}_{prob}(X)$ are invariant probability measures for $P$ and $\{P_t\}_{t\geq 0}$, respectively, which correspond to each other in the manner of Theorem \ref{thm:one-to-one}, that is,
$$\nu_*=G\mu_*\;\;\text{or, equivalently,}\;\; \mu_*=W\nu_*,$$
then the measure $\mu_*$ is absolutely continuous with respect to $\bl$ if and only if so is $\nu_*$. 
\item[(iii)] If $\mu_*, \nu_*\in\mathcal{M}_{prob}(X)$ are the unique invariant probability measures for $P$ and $\{P_t\}_{t\geq 0}$,\linebreak respectively, then $\mu_*$ is absolutely continuous with respect to $\bl$ if and only if so is $\nu_*$.
\end{itemize}
%In particular, if $P$ has a unique invariant probability measure $\mu_*$ (or, equivalently, $\{P_t\}_{t\geq 0}$ admits a unique invariant probability measure $\nu_*$) then both the measures $\mu_*$ and $G\mu_*$  (or, equivalently, $\nu_*$ and $W\nu_*$) are either absolute continuous or singular with respect to $\bl$.
\end{thm}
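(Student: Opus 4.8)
The plan is to prove the three items in sequence, with (i) following immediately from the general dichotomy lemma and (iii) being a special case of (ii). Part (i) is a direct application of Lemma~\ref{lem:abs} to the operator $P$ on the space $E=X$ with reference measure $m=\bl$: the hypothesis that every $w_\theta$ and every $S_k(t,\cdot)$ is non-singular with respect to $\ell_d$ is exactly what Lemma~\ref{prop:1} requires to conclude that $P$ preserves $\mathcal{M}_{ac}(X,\bl)$, and then Lemma~\ref{lem:abs} yields that each ergodic $P$-invariant probability measure is either absolutely continuous or singular with respect to $\bl$. So the content of (i) is essentially bookkeeping: invoke Lemma~\ref{prop:1}, then Lemma~\ref{lem:abs}.

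For part (ii), I would argue both implications using the fact (Lemma~\ref{prop:2}) that under the same non-singularity hypothesis both $G$ and $W$ preserve absolute continuity with respect to $\bl$. First suppose $\mu_*\ll\bl$. By Theorem~\ref{thm:one-to-one}(i) we have $\nu_*=G\mu_*$, and since $G(\mathcal{M}_{ac}(X,\bl))\subset\mathcal{M}_{ac}(X,\bl)$, it follows at once that $\nu_*\ll\bl$. Conversely, suppose $\nu_*\ll\bl$. Because $\mu_*$ and $\nu_*$ correspond in the sense of Theorem~\ref{thm:one-to-one}, we have $\mu_*=W\nu_*$ (this is the "equivalently" clause in the statement; it follows from $\nu_*=G\mu_*$ together with the identity $WG\mu_*=\mu_*$ of Theorem~\ref{thm:one-to-one}(i)), and since $W(\mathcal{M}_{ac}(X,\bl))\subset\mathcal{M}_{ac}(X,\bl)$, we conclude $\mu_*\ll\bl$. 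Thus $\mu_*\ll\bl \iff \nu_*\ll\bl$.

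For part (iii), if $\mu_*$ and $\nu_*$ are the \emph{unique} invariant probability measures of $P$ and $\{P_t\}_{t\geq 0}$ respectively, then by Theorem~\ref{thm:one-to-one}(i) the measure $G\mu_*$ is $\{P_t\}$-invariant, hence by uniqueness $G\mu_*=\nu_*$; likewise $W\nu_*$ is $P$-invariant, hence $W\nu_*=\mu_*$. So $\mu_*$ and $\nu_*$ correspond in the sense of Theorem~\ref{thm:one-to-one}, and part (ii) applies directly to give the equivalence.

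I do not anticipate a serious obstacle here: the heavy lifting has already been done in Lemmas~\ref{prop:1} and~\ref{prop:2} (the Frobenius--Perron computations showing $P$, $G$, $W$ all preserve $\mathcal{M}_{ac}(X,\bl)$) and in the cited Theorem~\ref{thm:one-to-one} establishing the one-to-one correspondence. The only point requiring a little care is making sure the two characterizations of "correspondence" in (ii) — namely $\nu_*=G\mu_*$ versus $\mu_*=W\nu_*$ — are genuinely equivalent under the invariance hypotheses, so that one may freely pass between $G$ and $W$ depending on which direction of the implication is being proved; this is handled by the identities $WG\mu_*=\mu_*$ and $GW\nu_*=\nu_*$ from Theorem~\ref{thm:one-to-one}.
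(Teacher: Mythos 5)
Your argument is correct and follows the paper's proof exactly: part (i) via Lemma~\ref{prop:1} together with Lemma~\ref{lem:abs}, part (ii) by combining Theorem~\ref{thm:one-to-one} with the preservation of absolute continuity by $G$ and $W$ from Lemma~\ref{prop:2}, and part (iii) as a consequence of (ii). Your write-up just makes explicit the uniqueness and correspondence bookkeeping that the paper leaves implicit.
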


\begin{proof}
The first statement of the theorem follows immediately from Lemmas \ref{lem:abs} and \ref{prop:1}. The second one is just a summary of Theorem \ref{thm:one-to-one} and Lemma~\ref{prop:2}. Finally, the last assertion is a straightforward consequence of the second one.
\end{proof}

For a given ergodic $P$-invariant probability measure $\mu_*$, Theorem \ref{cor:2} enables us to restrict our inquiry about the absolute continuity of both $\mu_*$ and $G\mu_*$ to the one about the non-triviality of the continuous part of~$\mu_*$. Certain general conditions providing the positive answer to this question are given in the result below. These conditions should be viewed as a starting point for the forthcoming discussion regarding possible restrictions on the component functions of the model that would guarantee the desired absolute continuity.
\begin{prop}\label{prop:prop2}
Let $\mu_*$ be any invariant probability measure of the Markov operator $P$, corresponding to \eqref{e:kernel}. Suppose that there exist open subsets $U,V$ of $Y$ and an index $i\in I$ such that, for some $n\in\n$ and some $\bar{c}>0$,
\begin{equation}
\label{e1:prop2}
P^n(x,B\times \{j\})\geq \bar{c}\,\ell_d(B\cap V)\quad\text{for any}\quad x\in U\times \{i\},\;j\in I\;\;\text{and}\;\;B\in\mathcal{B}(Y).
\end{equation}
Furthermore, assume that there exist a set $\widetilde{X}\in\mathcal{B}(X)$ with $\mu_*(\widetilde{X})>0$, $m\in\mathbb{N}$ and $\delta>0$ such that
\begin{equation}
\label{e2:prop2}
 P^{m}(x,U\times\{i\})\geq \delta\quad\text{for any}\quad x\in \widetilde{X}.
\end{equation}
Then the absolutely continuous part of $\mu_*$ with respect to $\bar{\ell}_d$ is non-trivial. If, additionally, $\mu_*$ is ergodic and the assumption of Theorem \ref{cor:2} is fulfilled, then both $\mu_*$ and $G\mu_*$ are absolutely continuous with respect to $\bar{\ell}_d$.
\end{prop}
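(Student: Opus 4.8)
The plan is to exhibit a nontrivial absolutely continuous component of $\mu_*$ by pushing the mass from $\widetilde{X}$ through $U\times\{i\}$ and then through the ``spreading'' step \eqref{e1:prop2}, all the while using the invariance $P^{m+n}\mu_* = \mu_*$. Concretely, I would first apply invariance to write, for any $B\in\mathcal{B}(Y)$ and any $j\in I$,
\begin{equation*}
\mu_*(B\times\{j\}) = P^{m+n}\mu_*(B\times\{j\}) = \int_X P^{m+n}(x,B\times\{j\})\,\mu_*(dx) \geq \int_{\widetilde{X}} P^{m+n}(x,B\times\{j\})\,\mu_*(dx).
\end{equation*}
Then, by the Chapman--Kolmogorov equation, $P^{m+n}(x,B\times\{j\}) = \int_X P^n(z,B\times\{j\})\,P^m(x,dz) \geq \int_{U\times\{i\}} P^n(z,B\times\{j\})\,P^m(x,dz)$. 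On $U\times\{i\}$ the bound \eqref{e1:prop2} gives $P^n(z,B\times\{j\})\geq \bar c\,\ell_d(B\cap V)$, so this last integral is at least $\bar c\,\ell_d(B\cap V)\,P^m(x,U\times\{i\})$, which by \eqref{e2:prop2} is at least $\bar c\,\delta\,\ell_d(B\cap V)$ for every $x\in\widetilde{X}$.

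Combining these estimates yields
\begin{equation*}
\mu_*(B\times\{j\}) \geq \bar c\,\delta\,\mu_*(\widetilde{X})\,\ell_d(B\cap V) \qquad\text{for all }B\in\mathcal{B}(Y),\ j\in I.
\end{equation*}
Writing $\kappa:=\bar c\,\delta\,\mu_*(\widetilde{X})>0$, this says precisely that $\mu_*$ dominates the measure $\kappa\,(\ell_d|_V\otimes m_c)$ on $X=Y\times I$, since testing on product sets $B\times\{j\}$ and summing over $j$ determines the inequality on all of $\mathcal{B}(X)$ via a monotone-class/$\pi$-system argument (the sets $B\times\{j\}$ generate $\mathcal{B}(X)$ and are closed under finite disjoint unions across the $j$'s). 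As $\ell_d|_V\otimes m_c$ is a nonzero measure absolutely continuous with respect to $\bl$ — here one uses $\interior Y\neq\varnothing$ together with $U,V$ being nonempty open subsets of $Y$, so $\ell_d(V)>0$ — it follows that the absolutely continuous part $(\mu_*)_{ac}$ of $\mu_*$ with respect to $\bl$ satisfies $(\mu_*)_{ac}\geq \kappa\,(\ell_d|_V\otimes m_c)$ and is therefore nontrivial. This proves the first assertion.

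For the second assertion, assume in addition that $\mu_*$ is ergodic and that the non-singularity hypothesis of Theorem \ref{cor:2} holds. By Theorem \ref{cor:2}(i), every ergodic $P$-invariant probability measure is either absolutely continuous or singular with respect to $\bl$; since we have just shown $(\mu_*)_{ac}\neq 0$, the measure $\mu_*$ cannot be singular, hence $\mu_*\ll\bl$. Finally, Theorem \ref{cor:2}(ii), applied with $\nu_*=G\mu_*$ (which is $\{P_t\}_{t\geq 0}$-invariant and corresponds to $\mu_*$ by Theorem \ref{thm:one-to-one}), gives $G\mu_*\ll\bl$ as well.

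The routine steps are the Chapman--Kolmogorov manipulation and the $\pi$-system extension from product sets to all Borel sets; the only point requiring a little care — and the main place where the hypotheses genuinely enter — is recognizing that the pointwise lower bound \eqref{e1:prop2} combined with \eqref{e2:prop2} survives integration against $\mu_*$ to produce a bound with a strictly positive constant, which in turn forces the Lebesgue decomposition of $\mu_*$ to have a nonzero absolutely continuous part (rather than merely being non-singular in some weaker sense); here the nonemptiness of $\interior Y$ and the openness of $V$ are exactly what guarantees $\ell_d(V)>0$ so that the dominating measure is not the zero measure.
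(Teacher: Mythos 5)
Your proposal is correct and follows essentially the same route as the paper: the paper applies invariance twice (first with $P^n$ to obtain the factor $\bar c\,\ell_d(B\cap V)\,\mu_*(U\times\{i\})$, then with $P^m$ to bound $\mu_*(U\times\{i\})\geq\delta\,\mu_*(\widetilde X)$), while you fold the two steps into a single use of $P^{m+n}$ via Chapman--Kolmogorov, arriving at the identical key inequality $\mu_*(B\times\{j\})\geq\bar c\,\delta\,\mu_*(\widetilde X)\,\ell_d(B\cap V)$ and then invoking the dichotomy and correspondence of Theorem \ref{cor:2} exactly as the paper does. The extra remarks you add (the $\pi$-system extension and the positivity of $\ell_d(V)$, which is really an implicit hypothesis rather than a consequence of $\interior Y\neq\emptyset$, since an open subset of $Y$ in the subspace topology could in principle be Lebesgue-null) do not change the substance of the argument.
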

\begin{proof}
Let $B\in\mathcal{B}(Y)$ and $j\in I$. Taking into account the invariance of $\mu_*$ and condition \eqref{e1:prop2},  we can write
\vspace{-0.1cm}
\begin{align*}
\mu_*(B\times\{j\})&=P^n\mu_*(B\times\{j\})=\int_X P^n(x,B\times\{j\})\,\mu_*(dx)\\
&\geq \int_{U\times \{i\}}P^n(x,B\times\{j\})\,\mu_*(dx)\geq\bar{c}\,\ell_d(B\cap V) \mu_*(U\times\{i\}).
\end{align*}
Using again the invariance of $\mu_*$, we get
\begin{align*}
\mu_*(B\times\{j\})&\geq\bar{c}\,\ell_d(B\cap V) P^m\mu_*(U\times\{i\})\geq\bar{c}\,\ell_d(B\cap V)\int_{\widetilde{X}} P^{m}(x,U\times\{i\})\,\mu_*(dx). 
\end{align*}
Finally, applying hypothesis \eqref{e2:prop2} gives
$$\mu_*(B\times\{j\})\geq \bar{c}\delta \mu(\widetilde{X})\ell_d(B\cap V),$$
which shows that $\mu_*$ indeed has a non-trivial absolutely continuous part. The second part of the assertion follows immediately from Theorem \ref{cor:2}.
\end{proof}
The assumptions of Proposition \ref{prop:prop2}, referring to an open set $U\times\{i\}$, may be interpreted as follows. Condition \eqref{e1:prop2} says that this set is $(n,\ell_d|_V)$-\emph{small} in the sense of \cite{b:meyn}. According to \eqref{e2:prop2}, it is also \emph{uniformly accessible} from some subset of $X$ with positive measure $\mu_*$ in some specified number of steps.
\vspace{-0.2cm}
\subsection{A criterion on absolute continuity of ergodic invariant measures associated with the model}\label{sec:42}
In this section, as well as in the rest of the paper, we require that $\Theta$ is either a finite set (with~the discrete topology), equipped with the counting measure $\vartheta=\sum_{\theta\in\Theta}\delta_{\theta}$, or an interval in~$\mathbb{R}$ (with the Euclidean topology), endowed with a finite Borel measure $\vartheta$ that is positive on every non-empty open set (e.g., if $\Theta$ is bounded, we can take $\vartheta=\ell_1|_{\mathcal{B}(\Theta)}$).

As mentioned in the introduction, our main goal is to provide a set of tractable conditions for the components of the model, which are sufficient for the absolute continuity of the unique invariant probability measures associated with the Markov operator $P$  and the semigroup $\{P_t\}_{t\geq 0}$, induced by \eqref{e:kernel} and \eqref{e:kernel_cont}, respectively. To do this, we shall need an explicit form of the $n$th-step kernel \hbox{$(x,A)\mapsto P^n(x,A)$}. For this reason, it is convenient to introduce the following piece of notation.

For any $k\in\n$, let $\bj{k}$,~$\bs{k}$, $\bt{k}$ denote $(j_1,\ldots,j_k)\in I^{k}$, \hbox{$(t_1,\ldots,t_k)\in\mathbb{R}_+^k$}, $(\theta_1,\ldots,\theta_k)\in\Theta^k$, respectively. Further, given any $i,j\in I$, we employ the following convention:
$$(i,\bj{k}):=(i,j_1,\ldots,j_{k})\quad\text{and}\quad (i,\bj{k},j):=(i,j_1,\ldots,j_{k},j).$$
Here, for notational consistency, we additionally put $(i,\bj{0}):=i$ and $(i,\bj{0},j):=(i,j)$ if $k=0$. 

With this notation, for any $n\in\mathbb{N}$, $y\in Y$, $j_0\in I$ and $(\bj{n},\bs{n},\bt{n})\in I^n\times \mathbb{R}_+^n\times\Theta^n$, we may~define
\begin{gather*}
\ww_1(y,j_0,t_1,\theta_1):=w_{\theta_1}(S_{j_0}(t_1,y)),\\
\ww_n(y,\,(j_0,\bj{n-1}),\bs{n},\bt{n}):=w_{\theta_n}(S_{j_{n-1}}(t_n,
\ww_{n-1}(y,(j_0,\bj{n-2}),\bs{n-1},\bt{n-1})));\\[0.3cm]
\Pi_1(y,(j_0,j_1),t_1,\theta_1):=\pi_{j_0 j_1}(w_{\theta_1}(S_{j_0}(t_1,y))),\\
\Pi_n(y,(j_0,\bj{n}),\bs{n},\bt{n}):=\Pi_{n-1}(y,(j_0,\bj{n-1}),\bs{n-1},\bt{n-1})\pi_{j_{n-1}j_n}(\ww_n(y,(j_0,\bj{n-1}),\bs{n},\bt{n}));\\[0.3cm]
\pp_1(y,j_0,t_1,\theta_1):=
p_{\theta_1}(S_{j_0}(t_1,y)),\\
\pp_n(y,(j_0,\bj{n-1}),\bs{n},\bt{n})\hspace{-0.1cm}:=\pp_{n-1}(y,(j_0,\bj{n-2}),\bs{n-1},\bt{n-1}) p_{\theta_n}(S_{j_{n-1}}(t_n, \ww_{n-1}(y,(j_0,\bj{n-2}),\bs{n-1},\bt{n-1}))).
\end{gather*}

The $n$th-step transition law of the chain $\{\Phi_k\}_{k\in\n_0}$ can be now expressed as
\begin{align}
\begin{split}
\label{e:nstep}
P^n((y,i),A)&=\sum_{\bj{n}\in I^n}\,\int_{\Theta^n}\int_{\mathbb{R}_+^n}\lambda^n e^{-\lambda(t_1+\ldots+t_n)}\mathbbm{1}_A(\ww_n(y,(i,\bj{n-1}),\bs{n},\bt{n}),j_n)\\
&\quad\times \Pi_n(y,(i,\bj{n}),\bs{n},\bt{n})
\pp_n(y,(i,\bj{n-1}),\bs{n},\bt{n})\,d\bs{n}\,\vartheta^{\otimes n}(d\bt{n})
\end{split}
\end{align}
for any $(y,i)\in X=Y\times I$ and any $A\in\mathcal{B}(X)$, where the symbols $d\bs{n}$ and $\vartheta^{\otimes n}(d\bt{n})$ represent $\ell_n(dt_1,\ldots,dt_n)$ and $(\vartheta \otimes \ldots \otimes \vartheta)(d\theta_1,\ldots,d\theta_n)$, respectively.

In what follows, we shall assume that, for any $\theta\in\Theta$ and any $i\in I$, the maps
$$Y \ni (y_1,\ldots,y_d)=:y\mapsto w_{\theta}(y)\quad\text{and}\quad (0,\infty)\times Y \ni (t,y)\mapsto S_i(t,y)$$
are continuously differentiable with respect to each of the variables $y_k$, $k=1,\ldots,d$, and $t$. 
In the case where $\Theta$ is an interval, we additionally require that the map $\interior \Theta \ni \theta \mapsto w_{\theta}(y)$ is continuously differentiable for any $y\in Y$.

Let $\hat{y}:=(\hat{y}_1,\ldots,\hat{y}_d)\in Y$, $n\in\n$, $i\in I$, $\bbj{n-1}:=(\hat{j}_1,\ldots,\hat{j}_{n-1})\in I^{n-1}$, $\bbt{n}:=(\hat{\theta}_1,\ldots,\hat{\theta}_n)\in\Theta^n$ and $\bbs{n}:=(\hat{t}_1,\ldots,\hat{t}_n)\in (0,\infty)^n$. For any $m\leq n$ and any pairwise different indices $k_1,\ldots,k_m\in \{1,\ldots,n\}$, the Jacobi \hbox{matrix} of the map 
$$(t_{k_1},\ldots,t_{k_m})\mapsto \ww_n(\hat{y},(i,\bbj{n-1}),\bs{n},\bbt{n})\quad\text{with fixed}\quad t_r=\hat{t}_r \quad\text{for}\quad r\in \{1,\ldots,n\}\backslash \{k_1,\ldots,k_m\}$$ 
at point $(\hat{t}_{k_1},\ldots,\hat{t}_{k_m})$ will be denoted by $\partial_{(t_{k_1},\ldots,t_{k_m})} \ww_n(\hat{y},(i,\bbj{n-1}),\bbs{n},\bbt{n})$. More precisely, assuming that $\mathcal{W}_n=(\mathcal{W}_n^{(1)},\ldots, \mathcal{W}_n^{(d)})$, where $\mathcal{W}_n^{(l)}$ takes values in $\mathbb{R}$ for any $l\in\{1,\ldots,d\}$, we put
$$\partial_{(t_{k_1},\ldots,t_{k_m})} \ww_n(y,(i,\bbj{n-1}),\bbs{n},\bbt{n}):=\left[\frac{\partial \ww_n^{(l)}}{\partial t_{k_r}}(\hat{y},(i,\bbj{n-1}),\bbs{n},\bbt{n}) \right]_{\substack{
r\in\{1,\ldots,m\}\\l\in\{1,\ldots,d\}}}.$$
Analogously, we can define $\partial_{(\theta_{l_1},\ldots,\theta_{l_m})} \ww_n(
\hat{y},(i,\bbj{n-1}),\bbs{n},\bbt{n})$ and $\partial_{(y_{p_1},\ldots,y_{p_m})} \ww_n(
\hat{y},(i,\bbj{n-1}),\bbs{n},\bbt{n})$ for any pairwise different $l_1,\ldots,l_m\in\{1,\ldots,n\}$ and $p_1,\ldots,p_m\in\{1,\ldots,d\}$, respectively.

\vspace*{0.05cm}
A key role in our discussion will be played by the following lemma, which provides a tractable condition under which the operator $P$ verifies the first hypothesis of Proposition \ref{prop:prop2}, expressed in~\eqref{e1:prop2}. The proof of this result is based upon ideas found in \cite{b:benaim1}.

\begin{lem}\label{lem:rank}
Let $(\hat{y},i)\in \interior Y\times I$, and suppose that, for some integer $n\geq d$, there exist sequences $\bbs{n}\in(0,\infty)^n$, $\bbt{n}\in \interior \Theta^n$ and, in the case of $n>1$, also $\bbj{n-1}\in I^{n-1}$, such that
\begin{equation}
\label{e:pos}
\pp_n(\hat{y},(i,\bbj{n-1}),\bbs{n},\bbt{n})\,\Pi_n(\hat{y},(i,\bbj{n-1},j),\bbs{n},\bbt{n})>0 \quad\text{for any}\quad j\in I,
\end{equation}
and
\begin{equation}
\label{e:rank}
\operatorname{rank}\,\partial_{\bs{n}} \ww_n(\hat{y},(i,\bbj{n-1}),\bbs{n},\bbt{n})=d.
\end{equation}
Then there is an open neighbourhood $U_{\hat{y}}\subset Y$ of $\hat{y}$ and an open neighbourhood $U_{\hat{w}}\subset Y$ of the point
\begin{equation}
\label{def:hatw}
\hat{w}:=\ww_n(\hat{y},(i,\bbj{n-1}),\bbs{n},\bbt{n})
\end{equation}
such that, for some constant $\bar{c}>0$, we have
\begin{equation}
\label{e:small1}
P^n(x,B\times\{j\})\geq \bar{c}\, \ell_d(B\cap U_{\hat{w}})\quad\text{for any}\quad x\in U_{\hat{y}}\times\{i\},\,B\in\mathcal{B}(Y),\;j\in I.
\end{equation}
\end{lem}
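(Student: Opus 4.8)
The plan is to exploit the explicit integral formula \eqref{e:nstep} for $P^n$, and to produce a lower bound for $P^n(x, B \times \{j\})$ by restricting the $\bt{n}$-integration to a small neighbourhood of $\bbt{n}$ (where $\vartheta^{\otimes n}$ has positive mass, by the standing assumption that $\vartheta$ is positive on nonempty open sets, or trivially if $\Theta$ is finite), restricting the $\bs{n}$-integration to a suitable $d$-dimensional slice, and then performing a change of variables in that slice using the submersion hypothesis \eqref{e:rank}. First I would single out $d$ coordinates $k_1 < \dots < k_d$ among $\{1,\dots,n\}$ such that the $d \times d$ submatrix $\partial_{(t_{k_1},\dots,t_{k_d})}\ww_n(\hat y,(i,\bbj{n-1}),\bbs{n},\bbt{n})$ is invertible — this is possible precisely because the full Jacobi matrix $\partial_{\bs{n}}\ww_n$ has rank $d$. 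Fix the remaining time variables $t_r = \hat t_r$ and define $\Psi(t_{k_1},\dots,t_{k_d}) := \ww_n(\hat y,(i,\bbj{n-1}),\bs{n},\bbt{n})$; then $\Psi$ is $C^1$ near $(\hat t_{k_1},\dots,\hat t_{k_d})$ with invertible derivative there, so by the inverse function theorem $\Psi$ is a $C^1$-diffeomorphism from some open neighbourhood $\widetilde U$ of $(\hat t_{k_1},\dots,\hat t_{k_d})$ in $(0,\infty)^d$ onto an open neighbourhood $U_{\hat w}$ of $\hat w = \Psi(\hat t_{k_1},\dots,\hat t_{k_d})$, and the Jacobian determinant of $\Psi^{-1}$ is bounded on $U_{\hat w}$, say by $\kappa > 0$, after shrinking.

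Next I would bring in the dependence on the initial point $x = (y,i)$. Since all the component maps $S_i, w_\theta, p_\theta, \pi_{ij}$ are continuous (and, where relevant, $C^1$), the map $(y, t_{k_1},\dots,t_{k_d}) \mapsto \ww_n(y,(i,\bbj{n-1}),\bs{n},\bbt{n})$ is jointly continuous, and one can use a quantitative implicit/inverse function argument (or a uniform compactness argument) to obtain a small open neighbourhood $U_{\hat y}$ of $\hat y$ such that, for each $y \in U_{\hat y}$, the map $\Psi_y(t_{k_1},\dots,t_{k_d}) := \ww_n(y,(i,\bbj{n-1}),\bs{n},\bbt{n})$ is still a $C^1$-diffeomorphism of a neighbourhood of $(\hat t_{k_1},\dots,\hat t_{k_d})$ onto an open set containing $U_{\hat w}$, with Jacobian of the inverse uniformly bounded by $2\kappa$, say. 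Simultaneously, by continuity and hypothesis \eqref{e:pos}, after shrinking $U_{\hat y}$ and the neighbourhood $V$ of $(\bbs{n},\bbt{n})$, the integrand $\Pi_n \cdot \pp_n$ is bounded below by some $c_0 > 0$ on $U_{\hat y} \times V \times \{j\}$ for every $j \in I$ (here it matters that $\bbt{n} \in \interior\Theta^n$, so a full-dimensional neighbourhood of $\bbt{n}$ fits inside $\Theta^n$). The factor $\lambda^n e^{-\lambda(t_1 + \dots + t_n)}$ is likewise bounded below on this compact region.

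Finally I would assemble the bound: dropping all summands in \eqref{e:nstep} except the one indexed by $\bj{n-1} = \bbj{n-1}$ and $j_n = j$, restricting $\bt{n}$ to a neighbourhood $N$ of $\bbt{n}$ of positive $\vartheta^{\otimes n}$-measure contained in $\interior\Theta^n$, restricting the times $t_r$ for $r \notin \{k_1,\dots,k_d\}$ to small intervals around $\hat t_r$ of positive Lebesgue measure, and restricting $(t_{k_1},\dots,t_{k_d})$ to $\Psi_y^{-1}(U_{\hat w})$, we get
\[
P^n((y,i), B\times\{j\}) \;\geq\; c_0 \,\vartheta^{\otimes n}(N)\,\prod_{r\notin\{k_1,\dots,k_d\}} \!\!\eta_r \int_{\Psi_y^{-1}(U_{\hat w})} \lambda^n e^{-\lambda(\cdots)} \mathbbm{1}_B(\Psi_y(t_{k_1},\dots,t_{k_d}))\, d(t_{k_1},\dots,t_{k_d}),
\]
and the change of variables $u = \Psi_y(t_{k_1},\dots,t_{k_d})$ turns the last integral into $\int_{U_{\hat w}} \mathbbm{1}_B(u) |\det D\Psi_y^{-1}(u)|^{-1}\cdots du \geq (2\kappa)^{-1} \underline{\lambda}\, \ell_d(B \cap U_{\hat w})$ for a suitable lower bound $\underline{\lambda}$ on the exponential factor; here I abbreviate the exponential argument since its exact value is irrelevant once one has a positive lower bound over the compact region. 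Collecting the constants gives \eqref{e:small1}. I expect the main obstacle to be the \emph{uniformity in $y$}: making precise that the diffeomorphism property of $\Psi_y$, and in particular the uniform lower bound on the size of the image $U_{\hat w}$ and the uniform bound on the inverse Jacobian, persist for all $y$ in a neighbourhood of $\hat y$. This is where one must be careful — either invoke a version of the inverse function theorem with estimates depending continuously on parameters, or argue by contradiction using compactness. Everything else (the positivity of $\Pi_n \pp_n$, the lower bound on the exponential weight, the positivity of $\vartheta^{\otimes n}$ on open sets) is routine continuity and follows from the standing hypotheses.
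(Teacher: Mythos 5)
Your overall route is the same as the paper's: isolate $d$ time coordinates with invertible Jacobian block, use the inverse function theorem to change variables, get positivity of $\pp_n\Pi_n$ and of the exponential weight by continuity near $(\hat y,\bbs{n},\bbt{n})$, and keep only the summand $\bbj{n-1}$, $j_n=j$ in \eqref{e:nstep}. The genuine gap is in your assembled inequality. In \eqref{e:nstep} the indicator is $\mathbbm{1}_B\bigl(\ww_n(y,(i,\bbj{n-1}),\bs{n},\bt{n})\bigr)$, which depends on \emph{all} of $t_1,\dots,t_n$ and on $\bt{n}$; your displayed bound replaces it by $\mathbbm{1}_B(\Psi_y(t_{k_1},\dots,t_{k_d}))$, i.e.\ by the map with the remaining times frozen at $\hat t_r$ and $\bt{n}$ frozen at $\bbt{n}$, while at the same time integrating over those very variables and pulling out the factors $\vartheta^{\otimes n}(N)$ and $\prod_r \eta_r$. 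Since an indicator is not monotone under perturbation of its argument, this is not a valid lower bound: for $\bt{n}\in N$ and $t_r$ near $\hat t_r$ the true point $\ww_n(\dots)$ may leave $B\cap U_{\hat w}$ even though $\Psi_y(\dots)\in B$. Consequently the uniformity you single out as ``the main obstacle'' must be established jointly in $(y,\,t_{d+1},\dots,t_n,\,\bt{n})$, not only in $y$, and in addition you need the images of all these parametrized slice maps to contain one \emph{common} open set $U_{\hat w}$ — it is not enough that each is a diffeomorphism onto some open neighbourhood of $\hat w$.

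This is precisely the point the paper's proof is organized around. It keeps all $n$ times as variables via $\mathcal{R}_y(\bs{n}):=(\ww_n(y,(i,\bbj{n-1}),\bs{n},\bbt{n}),\mathbf{t}^{n-d})$ (so the dependence of the indicator on the remaining times is built into the change of variables, producing the factor $\ell_{n-d}(U_{\hat{\mathbf{t}}^{n-d}})$ in $\bar c$), and then augments with the parameters, $\mathcal{H}(\bs{n},y):=(\mathcal{R}_y(\bs{n}),y)$ in the finite-$\Theta$ case and $\mathcal{H}(\bs{n},y,\bt{n}):=(\mathcal{R}_{y,\bt{n}}(\bs{n}),y,\bt{n})$ when $\Theta$ is an interval. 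The Jacobian of $\mathcal{H}$ is block-triangular with the same nonzero determinant, so a single application of the local inversion theorem, followed by choosing a \emph{product} neighbourhood $U_{(\hat w,\hat{\mathbf{t}}^{n-d})}\times U_{\hat y}$ (resp.\ $\times\,U_{\bbt{n}}$) inside the open image $\mathcal{H}(\widetilde V)$, guarantees that for every fixed parameter value the slice map is a diffeomorphism of an open set of times onto the same target $U_{(\hat w,\hat{\mathbf{t}}^{n-d})}$, with $|\det|^{-1}\mathcal{T}_n\geq\tilde c$ already secured on $\widetilde V$; the change of variables is then performed for each fixed $(y,\bt{n})$ separately. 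Your suggestion of a parametric inverse function theorem or a compactness argument can be made to work, but only if you carry the remaining times and, in the interval case, $\bt{n}$ as parameters alongside $y$ and arrange a common $U_{\hat w}$ in all the images; as written, your final estimate skips exactly this step.
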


\begin{proof}
According to \eqref{e:rank}, there exist $k_1,\ldots,k_d\in \{1,\ldots,n\}$ such that
$$
\operatorname{det}\,\partial_{(t_{k_1},\ldots,t_{k_d})}\ww_n(\hat{y},\,(i,\bbj{n-1}),\bbs{n},\bbt{n})\neq 0.
$$
Without loss of generality, we may further assume that $(k_1,\ldots,k_d)=(1,\ldots,d)$, i.e.
\begin{equation}
\label{e:det1}
\operatorname{det}\,\partial_{\bs{d}}\ww_n(\hat{y},\,(i,\bbj{n-1}),\bbs{n},\bbt{n})\neq 0.
\end{equation}
In the analysis that follows, given $\bs{n}=(t_1,\ldots,t_n)\in\mathbb{R}_+^n$, we shall write $\bold{t}^{n-d}$ to denote $(t_{d+1},\ldots,t_n)$, so that $\bs{n}=(\bs{d},\bold{t}^{n-d})$.

\textbf{Case I: } Consider first the case where $\Theta$ is finite. For any $y\in Y$, let us introduce the map $\mathcal{R}_y:(0,\infty)^n\to Y\times \mathbb{R}_+^{n-d}\subset\mathbb{R}^n$ given by
$$\mathcal{R}_y(\bs{n}):=(\ww_n(y,(i,\bbj{n-1}),\bs{n},\bbt{n}),\bold{t}^{n-d})\quad\text{for}\quad \bs{n}\in(0,\infty)^n.$$
We can then easily observe that
$$\partial_{\bs{n}}\mathcal{R}_y(\bs{n})=
\left[
\begin{array}{c|c}
\partial_{\bs{d}}\ww_n & \partial_{\bold{t}^{n-d}}\mathcal{W}_n\\
\hline
\mathbf{0}_{d,\, n-d} &I_{n-d}
\end{array}
\right](y,(i,\bbj{n-1}),\bs{n},\bbt{n}),
$$
where $\mathbf{0}_{d,\,n-d}$ and $I_{n-d}$ are the zero matrix of size $d\times (m-d)$ and the identity matrix of order $n-d$, respectively. This yields that
\begin{equation}
\label{e:det_eq}
\operatorname{det}\partial_{\bs{n}}\mathcal{R}_y(\bs{n})=\operatorname{det}\,\partial_{\bs{d}}\ww_n(y,(i,\bbj{n-1}),\bs{n},\bbt{n})\quad\text{for any}\quad \bs{n}\in (0,\infty)^n,\;y\in Y.
\end{equation}

Further, let us define $\mathcal{H}:(0,\infty)^n \times \interior Y \to (Y\times\mathbb{R}_+^{n-d})\times Y$, acting from an open subset of~$\mathbb{R}^{n+d}$ into itself, by
$$\mathcal{H}(\bs{n},y):=(\mathcal{R}_{y}(\bs{n}),y)\quad\text{for any}\quad \bs{n}\in (0,\infty)^n,\;y\in \interior Y.$$
Since the Jacobi matrix of $\mathcal{\mathcal{H}}$ can also be written in a block form, namely
$$\partial_{(\bs{n},y)}\mathcal{H}(\bs{n},y)=
\left[
\begin{array}{c|c}
\partial_{\bs{d}}\ww_n & \partial_{\bold{t}^{n-d}}\mathcal{W}_n \;\; \partial_{y}\mathcal{W}_n\\
\hline
\mathbf{0}_{d,n} &I_{n}
\end{array}
\right](y,(i,\bbj{n-1}),\bs{n},\bbt{n}),
$$
it follows, due to \eqref{e:det1}, that
\begin{equation}
\label{e:det2}
\operatorname{det}\partial_{(\bs{n},y)}\mathcal{H}(\bbs{n},\hat{y})=\operatorname{det}\,\partial_{\bs{d}}\ww_n(\hat{y},\,(i,\bbj{n-1}),\bbs{n},\bbt{n}) \operatorname\neq 0.
\end{equation}
Consequently, by virtue of the local inversion theorem, we can choose an open neighbourhood \hbox{$\widehat{V}_{(\bbs{n},\hat{y})}\subset (0,\infty)^n\times\interior Y$} of  $(\bbs{n},\hat{y})$ so that $\mathcal{H}|_{\widehat{V}_{(\bbs{n},\hat{y})}}:\widehat{V}_{(\bbs{n},\hat{y})}\to \mathcal{H}(\widehat{V}_{(\bbs{n},\hat{y})})$ is a diffeomorphism.\\[-0.1cm] \hbox{Obviously}, $\mathcal{H}(\widehat{V}_{(\bbs{n},\hat{y})})\subset (Y\times \mathbb{R}_+^{n-d})\times Y$.

\vspace*{0.1cm}
If we now define
\begin{equation}
\label{def_t}
\mathcal{T}_n(y,(i,\bbj{n-1},j),\bs{n},\bt{n}):=\lambda^n e^{-\lambda(t_1+\ldots+t_n)}\pp_n(y,(i,\bbj{n-1}),\bs{n},\bt{n})\Pi_n(y,(i,\bbj{n-1},j),\bs{n},\bt{n}),
\end{equation}
then, using \eqref{e:pos} and \eqref{e:det2}, together with continuity of the component functions of the model and the map  $\widehat{V}_{(\bbs{n},\hat{y})}\ni(\bs{n},y)\mapsto \det \partial_{(\bs{n},y)}\mathcal{H}(\bs{n},y)$, we may find an open neighbourhood $\widetilde{V}_{(\bbs{n},\hat{y})}\subset \widehat{V}_{(\bbs{n},\hat{y})}$ of $(\bbs{n},\hat{y})$ such that, for some constant $\tilde{c}>0$,
\begin{equation}
\label{e:pos2a}
\left|\operatorname{det}\partial_{(\bs{n},y)}\mathcal{H}({\bs{n}},y) \right|^{-1}\mathcal{T}_n(y,(i,\bbj{n-1},j),\bs{n},\bbt{n})\geq \tilde{c}\quad\text{for any}\quad (\bs{n},y)\in \widetilde{V}_{(\bbs{n},\hat{y})},\;j\in I.
\end{equation}
Taking into account that, due to \eqref{e:det_eq},  
$$\operatorname{det}\partial_{(\bs{n},y)}\mathcal{H}(\bs{n},y)=\operatorname{det}\,\partial_{\bs{d}}\ww_n(y,(i,\bbj{n-1}),\bs{n},\bbt{n})=\operatorname{det}\partial_{\bs{n}}\mathcal{R}_y(\bs{n}),\vspace{-0.2cm}$$ we then obtain
\begin{equation}
\label{e:pos2}
\left|\operatorname{det}\partial_{\bs{n}}\mathcal{R}_y({\bs{n}}) \right|^{-1}\mathcal{T}_n(y,(i,\bbj{n-1},j),\bs{n},\bbt{n})\geq \tilde{c}\quad\text{for any}\quad (\bs{n},u)\in \widetilde{V}_{(\bbs{n},\hat{y})},\;j\in I.
\end{equation}

Clearly, $\mathcal{H}|_{\widetilde{V}_{(\bbs{n},\hat{y})}}:\widetilde{V}_{(\bbs{n},\hat{y})}\to \mathcal{H}(\widetilde{V}_{(\bbs{n},\hat{y})})$ is also a diffeomorphism, and thus, in particular, the set $\mathcal{H}(\widetilde{V}_{(\bbs{n},\hat{y})})$ is open. Since $((\hat{w},\hat{\bold{t}}^{n-d}),\hat{y})\in \mathcal{H}(\widetilde{V}_{(\bbs{n},\hat{y})})$, where $\hat{w}$ is given by \eqref{def:hatw}, there exist open bounded neighbourhoods $U_{(\hat{w},\,\hat{\bold{t}}^{n-d})}\subset  Y\times \mathbb{R}_+^{n-d}$ and $U_{\hat{y}}\subset Y$ of the points $(\hat{w},\hat{\bold{t}}^{n-d})$ and $\hat{y}$, respectively, with the property that $U_{(\hat{w},\,\hat{\bold{t}}^{n-d})}\times U_{\hat{y}}\subset \mathcal{H}(\widetilde{V}_{(\bbs{n},\hat{y})})$. Let $$V_{(\bbs{n},\hat{y})}:=\mathcal{H}^{-1}(U_{(\hat{w},\,\hat{\bold{t}}^{n-d})}\times U_{\hat{y}}),$$ and, for any $((w,\bold{t}^{n-d}),y)\in U_{(\hat{w},\,\hat{\bold{t}}^{n-d})}\times U_{\hat{y}}$, write $\mathcal{H}^{-1}((w,\bold{t}^{n-d}),y)=(\overline{\mathcal{R}}_y(w,\bold{t}^{n-d}),y)$. Then, it follows immediately that\vspace{0.1cm} $\mathcal{R}_y\left(\overline{\mathcal{R}}_y(w,\bold{t}^{n-d})\right)=(w,\bold{t}^{n-d})$, whence $\overline{\mathcal{R}}_y$ is the continuously differentiable inverse of an appropriate restriction of $\mathcal{R}_y$. More specifically, introducing
$$W(y):=\{\bs{n}\in (0,\infty)^n:\,(\bs{n},y)\in V_{(\bbs{n},\hat{y})} \}\quad\text{for every}\quad y\in U_{\hat{y}},$$
we see that each of these sets is open, and that \hbox{$\mathcal{R}_y|_{W(y)}:W(y)\to U_{(\hat{w},\,\hat{\bold{t}}^{n-d})}$} is a diffeomorphism for any $y\in U_{\hat{y}}$. Obviously, by the definition of $W(y)$, we have 
\begin{equation}
\label{e:wu}
(\bs{n},y)\in V_{(\bbs{n},\hat{y})}\quad\text{whenever}\quad \bs{n}\in W(y),\;y\in U_{\hat{y}}.
\end{equation}

In view of the above, we can choose (independently of $y$) open neighbourhoods $U_{\hat{w}}\subset Y$ and $U_{\hat{\bold{t}}^{n-d}}\subset\mathbb{R}_+^{n-d}$ of $\hat{w}$ and $\hat{\bold{t}}^{n-d}$, respectively, in such a way that
\begin{equation}
\label{e:sub}
U_{\hat{w}}\times U_{\hat{\bold{t}}^{n-d}} \subset U_{(\hat{w},\,\hat{\bold{t}}^{n-d})}= \mathcal{R}_y(W(y))\quad\text{for any}\quad y \in U_{\hat{y}}.
\end{equation}

Now, keeping in mind \eqref{e:nstep}, \eqref{e:pos2} and \eqref{e:wu}, for any $B\in\mathcal{B}(Y)$, $j\in I$ and $y\in U_{\hat{y}}$, we can write
\begin{align*}
P^n((y,i)&, B\times\{j\})\geq \int_{\mathbb{R}_+^d}\mathbbm{1}_{B}(\ww_n(y,\,(i,\bbj{n-1}),\bs{n},\bbt{n}))\mathcal{T}_n(y,(i,\bbj{n-1},j),\bs{n},\bbt{n})d\bs{n}\\
&\geq\int_{W(y)}\mathbbm{1}_{B\times \mathbb{R}^{n-d}} (\mathcal{R}_y(\bs{n})) \mathcal{T}_n(y,(i,\bbj{n-1},j),\bs{n},\bbt{n})d\bs{n}\\
&=\int_{W(y)}\mathbbm{1}_{B\times \mathbb{R}^{n-d}} (\mathcal{R}_y(\bs{n})) |\operatorname{det}\partial_{\bs{n}}\mathcal{R}_y({\bs{n}})|\cdot|\operatorname{det}\partial_{\bs{n}}\mathcal{R}_y({\bs{n}})|^{-1} \mathcal{T}_n(y,(i,\bbj{n-1},j),\bs{n},\bbt{n})d\bs{n}\\
&\geq \tilde{c} \int_{W(y)}\mathbbm{1}_{B\times \mathbb{R}^{n-d}} (\mathcal{R}_y(\bs{n}))|\operatorname{det}\partial_{\bs{n}}\mathcal{R}_y({\bs{n}})|\,d\bs{n}.
\end{align*}
If we now change variables by setting $\bold{s}_n=\mathcal{R}_y(\bs{n})$ and, further, apply \eqref{e:sub}, then we can conclude that
\begin{align*}
P^n((y,i),B\times\{j\})&=\tilde{c}\int_{\mathcal{R}_y(W(y))}\mathbbm{1}_{B\times\mathbb{R}^{n-d}}(\bold{s}_n)\,d\bold{s}_n\geq \tilde{c}\int_{U_{\hat{w}}\times U_{\hat{\bold{t}}^{n-d}}}\mathbbm{1}_{B\times\mathbb{R}^{n-d}}(\bold{s}_n)\,d\bold{s}_n\\
&=\tilde{c}\,\ell_n((B\times \mathbb{R}^{n-d})\cap (U_{\hat{w}}\times U_{\hat{\bold{t}}^{n-d}}))=\tilde{c}\,\ell_{n-d}(U_{\hat{\bold{t}}^{n-d}})\ell_d(B\cap U_{\hat{w}}),
\end{align*}
Finally, we see that \eqref{e:small1} holds with $\bar{c}:=\tilde{c}\,\ell_{n-d}(U_{\hat{\bold{t}}^{n-d}})>0$.

\vspace*{0.1cm}
\textbf{Case II:} Let us now assume that $\Theta$ is an interval in $\mathbb{R}$. The proof in this case is similar to the previous one.  This time, however, we need to consider a family $\{\mathcal{R}_{y,\,\bt{n}}:\,y\in Y,\; \bt{n}\in\Theta^n \}$ of maps from $(0,\infty)^n$ into $Y\times\mathbb{R}_+^{n-d}\subset\mathbb{R}^n$, wherein $\mathcal{R}_{y,\,\bt{n}}$ is defined by
$$\mathcal{R}_{y,\,\bt{n}}(\bs{n}):=(\ww_n(y,(i,\bbj{n-1}),\bs{n},\bt{n}),\bold{t}^{n-d})\quad\text{for}\quad \bs{n}\in(0,\infty)^n.$$
Furthermore, $\mathcal{H}$ will now stand for the map  \hbox{$\mathcal{H}:(0,\infty)^n \times 
\interior Y \times \interior \Theta^n   \to (Y \times \mathbb{R}_+^{n-d}) \times Y\times\Theta^n$} (acting from an open subset of $\mathbb{R}^{2n+d}$ into itself) given by $$\mathcal{H}(\bs{n},y,\bt{n}):=(\mathcal{R}_{y,\,\bt{n}}(\bs{n}),y,\bt{n})\quad\text{for any}\quad \bs{n}\in (0,\infty)^n,\;y\in \interior Y, \; \bt{n}\in \interior \Theta^n.$$

Since the Jacobi matrix of $\mathcal{H}$ is of the form
$$\partial_{(\bs{n},y,\bt{n})}\mathcal{H}(\bs{n},y,\bt{n})=
\left[
\begin{array}{c|c}
\partial_{\bs{d}}\ww_n & \partial_{\bold{t}^{n-d}}\mathcal{W}_n \;\; \partial_{y}\mathcal{W}_n\;\;\partial_{\bt{n}}\mathcal{W}_n\\
\hline
\mathbf{0}_{d,2n} &I_{2n}
\end{array}
\right](y,(i,\bbj{n-1}),\bs{n},\bt{n}),
$$
similarly as in the previous case, we obtain
\begin{equation}
\label{e:det2a}
\operatorname{det}\partial_{(\bs{n},y,\bt{n})}\mathcal{H}(\bbs{n},\hat{y},\bbt{n})=\operatorname{det}\,\partial_{\bs{d}}\ww_n(\hat{y},\,(i,\bbj{n-1}),\bbs{n},\bbt{n}) \operatorname\neq 0.
\end{equation}
This enables us to choose an open neighbourhood \hbox{$\widehat{V}_{(\bbs{n},\hat{y},\,\bbt{n})}\subset (0,\infty)^n\times\interior Y\times \interior \Theta^n$} of the point  $(\bbs{n},\hat{y},\bbt{n})$ so that $\mathcal{H}|_{\widehat{V}_{(\bbs{n},\hat{y},\bbt{n})}}$ is a diffeomorphism from $\widehat{V}_{(\bbs{n},\hat{y},\bbt{n})}$ onto $\mathcal{H}(\widehat{V}_{(\bbs{n},\hat{y},\bbt{n})})$.

Applying \eqref{e:pos}, \eqref{e:det2a} and the continuity of  $\widehat{V}_{(\bbs{n},\hat{y},\,\bbt{n})}\ni(\bs{n},y,\bt{n})\mapsto \det \partial_{(\bs{n},y,\bt{n})}\mathcal{H}(\bs{n},y,\bt{n})$, we may find an open neighbourhood $\widetilde{V}_{(\bbs{n},\hat{y},\,\bbt{n})}\subset \widehat{V}_{(\bbs{n},\hat{y},\,\bbt{n})}$ of the point $(\bbs{n},\hat{y},\bbt{n})$ such that, for some constant $\tilde{c}>0$,
\begin{equation*}
\left|\operatorname{det}\partial_{(\bs{n},y,\,\bt{n})}\mathcal{H}({\bs{n}},y,\bt{n}) \right|^{-1}\mathcal{T}_n(y,(i,\bbj{n-1},j),\bs{n},\bt{n})\geq \tilde{c}\quad\text{for any}\quad (\bs{n},y,\bt{n})\in \widetilde{V}_{(\bbs{n},\hat{y},\,\bbt{n})},\;j\in I,
\end{equation*}
where $\mathcal{T}_n$ is defined by \eqref{def_t}. This obviously yields
\begin{equation}
\label{e:pos2aa}
\left|\operatorname{det}\partial_{\bs{n}}\mathcal{R}_{y,\bt{n}}({\bs{n}}) \right|^{-1}\mathcal{T}_n(y,(i,\bbj{n-1},j),\bs{n},\bt{n})\geq \tilde{c}\quad\text{for any}\quad (\bs{n},y,\bt{n})\in \widetilde{V}_{(\bbs{n},\hat{y},\,\bbt{n})},\;j\in I.
\end{equation}

Since $\mathcal{H}(\widetilde{V}_{(\bbs{n},\hat{y},\bbt{n})})$ is open and $((\hat{w},\hat{\bold{t}}^{n-d}),\hat{y},\bbt{n})\in \mathcal{H}(\widetilde{V}_{(\bbs{n},\hat{y},\bbt{n})})$, it follows that there exist open bounded neighbourhoods $U_{(\hat{w},\,\hat{\bold{t}}^{n-d})}\subset Y\times \mathbb{R}_+^{n-d}$, $U_{\hat{y}}\subset Y$ and $U_{\bbt{n}}\subset \Theta^n$ of the points $(\hat{w},\hat{\bold{t}}^{n-d})$, $\hat{y}$ and $\bbt{n}$, respectively, such that $U_{(\hat{w},\,\hat{\bold{t}}^{n-d})}\times U_{\hat{y}}\times U_{\bt{n}}\subset \mathcal{H}(\widetilde{V}_{(\bbs{n},\hat{y},\,\bbt{n})})$. Define $$V_{(\bbs{n},\hat{y},\,\bbt{n})}:=\mathcal{H}^{-1}(U_{(\hat{w},\,\hat{\bold{t}}^{n-d})}\times U_{\hat{y}}\times U_{\bbt{n}})$$ 
and
$$W(y,\bt{n}):=\{\bs{n}\in (0,\infty)^n:\,(\bs{n},y,\bt{n})\in V_{(\bbs{n},\hat{y},\,\bbt{n})} \}\quad\text{for any}\quad (y,\bt{n})\in U_{\hat{y}}\times U_{\bbt{n}}.$$ 
Then, arguing analogously as in Case I, we can conclude that all the sets $W(y,\bt{n})$ are open, and that $\mathcal{R}_{y,\theta_n}|_{W(y,,\bt{n})}$ is a diffeomorphism from $W(y,\bt{n})$ onto $U_{(\hat{w},\,\hat{\bold{t}}^{n-d})}$ for any $(y,\bt{n})\in U_{\hat{y}}\times U_{\bbt{n}}$. This observation, as before, enables us to choose (independently of $y$ and $\bt{n}$) open neighbourhoods $U_{\hat{w}}\subset Y$ and $U_{\hat{\bold{t}}^{n-d}}\subset\mathbb{R}_+^{n-d}$ of the points $\hat{w}$ and $\hat{\bold{t}}^{n-d}$, respectively, so that
\begin{equation}
\label{e:sub2}
U_{\hat{w}}\times U_{\hat{\bold{t}}^{n-d}} \subset U_{(\hat{w},\,\hat{\bold{t}}^{n-d})}= \mathcal{R}_{y,\theta_n}(W(y,\bt{n}))\quad\text{for any}\quad (y,\bt{n}) \in U_{\hat{y}}\times U_{\bbt{n}}.
\end{equation}

Proceeding similarly as in the first part of the proof, from \eqref{e:nstep} and \eqref{e:pos2aa} we may now deduce that, for any $B\in\mathcal{B}(Y)$, $j\in I$ and $y\in U_{\hat{y}}$,
\begin{align*}
P^n((y,i), B\times\{j\})&\geq \int_{\Theta^n}\int_{\mathbb{R}_+^d}\mathbbm{1}_{B}(\ww_n(y,\,(i,\bbj{n-1}),\bs{n},\bt{n}))\mathcal{T}_n(y,(i,\bbj{n-1},j),\bs{n},\bt{n})d\bs{n}\,\vartheta^{\otimes n}(d\bt{n})\\
&\geq\int_{U_{\bbt{n}}}\int_{W(y,\bt{n})}\mathbbm{1}_{B\times \mathbb{R}^{n-d}} (\mathcal{R}_{y,\bt{n}}(\bs{n})) \mathcal{T}_n(y,(i,\bbj{n-1},j),\bs{n},\bt{n})d\bs{n}\,\vartheta^{\otimes n}(d\bt{n})\\
&\geq \tilde{c} \int_{U_{\bbt{n}}} \int_{W(y,\bt{n})}\mathbbm{1}_{B\times \mathbb{R}^{n-d}} (\mathcal{R}_{y,\bt{n}}(\bs{n}))|\operatorname{det}\partial_{\bs{n}}\mathcal{R}_{y,\bt{n}}({\bs{n}})|\,d\bs{n}\,\vartheta^{\otimes n}(d\bt{n}).
\end{align*}
%&=\int_{U_{\bbt{n}}}\int_{W(y,\bt{n})}\mathbbm{1}_{B\times \mathbb{R}^{n-d}} (\mathcal{R}_{y,\bt{n}}(\bs{n})) |\operatorname{det}\partial_{\bs{n}}\mathcal{R}_{y,\bt{n}}({\bs{n}})|\cdot|\operatorname{det}\partial_{\bs{n}}\mathcal{R}_{y,\bt{n}}({\bs{n}})|^{-1} \mathcal{T}_n(y,(\bbj{n-1},j),\bs{n},\bbt{n})d\bs{n}\,\vartheta^{\otimes n}(d\bt{n})\\
Finally, substituting $\bold{s}_n=\mathcal{R}_{y,\bt{n}}(\bs{n})$ (for every fixed $(y,\bt{n})$ separately) and applying \eqref{e:sub2}, gives
\begin{align*}
P^n((y,i),B\times\{j\})&=\tilde{c}\int_{U_{\bbt{n}}}\int_{\mathcal{R}_y(W(y,\bt{n}))}\mathbbm{1}_{B\times\mathbb{R}^{n-d}}(\bold{s}_n)\,d\bold{s}_n\,\vartheta^{\otimes n}(d\bt{n})\\
&\geq \tilde{c} \int_{U_{\bbt{n}}}\int_{U_{\hat{w}}\times U_{\hat{\bold{t}}^{n-d}}}\mathbbm{1}_{B\times\mathbb{R}^{n-d}}(\bold{s}_n)\,d\bold{s}_n\,\vartheta^{\otimes n}(d\bt{n})\\
&=\tilde{c}\,\vartheta^{\otimes n}(U_{\bbt{n}})\ell_{n-d}(U_{\hat{\bold{t}}^{n-d}})\ell_d(B\cap U_{\hat{w}}),
\end{align*}
which shows that \eqref{e:small1} holds with $\bar{c}:=\tilde{c}\,\vartheta^{\otimes n}(U_{\bbt{n}})\ell_{n-d}(U_{\hat{\bold{t}}^{n-d}})>0$ and, therefore, completes the proof.
\end{proof}

\begin{rem}
Note that, in the case where $d=1$, condition \eqref{e:rank} can be expressed in the following simple form:
$$\sum_{r=1}^n\left(\frac{\partial \ww_n}{\partial t_r}(\hat{y},(i,\bbj{n-1}),\bbs{n},\bbt{n})\right)^2>0.$$
\end{rem}

Assuming that conditions \eqref{e:pos} and \eqref{e:rank} hold with some $(\hat{y},i)\in \operatorname{int} X$, we intend to apply Proposition~\ref{prop:prop2} with $U=U_{\hat{y}}$ and $V=U_{\hat{w}}$, where $U_{\hat{y}}$ and $U_{\hat{w}}$ are the open sets guaranteed by Lemma~\ref{lem:rank}. To do this, we need to know that, for any given $P$-ergodic invariant measure $\mu_*$, the set  $U_{\hat{y}}\times\{i\}$ is uniformly accessible from an open set $\widetilde{X}\subset X$, satisfying $\mu_*(\widetilde{X})>0$, in some given number of steps, i.e. condition \eqref{e2:prop2} holds with $U=U_{\hat{y}}$ and the given~$i$ for some $m\in\n$. This is the case, for example, if the operator $P$ is asymptotically stable, and the point~$(\hat{y},i)$, verifying the desired properties, belongs to the support of the unique $P$-invariant measure.

\begin{cor}\label{cor:3}
Let $P$ and $\{P_t\}_{t\geq 0}$ stand for the Markov operator and the Markov semigroup \hbox{induced} by \eqref{e:kernel} and \eqref{e:kernel_cont}, respectively. Further, suppose that, for some  $\mu_*\in\mathcal{M}_{prob}(X)$, and for any $x\in X$, the sequence $\{P^n \delta_x\}_{n\in\n}$ is weakly convergent to $\mu_*$ (which, by Remark \ref{rem:asymp}, is equivalent to say that $P$ is asymptotically stable). Moreover, assume that all the transformations $w_{\theta}$ and $S_k(t,\cdot)$ are non-singular with respect to~$\ell_d$, and that there exists a point $(\hat{y},i)\in \operatorname{int}X \cap \operatorname{supp}\mu_*$, for which the assumptions of Lemma \ref{lem:rank} are fulfilled. Then both $\mu_*$ and $G\mu_*$, which are then unique invariant measures for $P$ and $\{P_t\}_{t\geq 0}$, respectively, are absolutely continuous with respect to $\bar{\ell}_d$.
\end{cor}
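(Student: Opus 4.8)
The plan is to combine Lemma~\ref{lem:rank} with Proposition~\ref{prop:prop2}, using the asymptotic stability hypothesis and the Portmanteau theorem to verify the accessibility condition \eqref{e2:prop2}. First, since the transformations $w_\theta$ and $S_k(t,\cdot)$ are non-singular with respect to $\ell_d$, Lemma~\ref{prop:1} together with Lemma~\ref{lem:abs} (i.e.\ Theorem~\ref{cor:2}(i)) applies, so every ergodic $P$-invariant probability measure is either absolutely continuous or singular with respect to $\bl$. Moreover, since $P$ is asymptotically stable, $\mu_*$ is its unique invariant probability measure, and hence ergodic; by Theorem~\ref{cor:2}(ii)--(iii), it suffices to show that the absolutely continuous part of $\mu_*$ is non-trivial, which is exactly what Proposition~\ref{prop:prop2} delivers once its two hypotheses are checked.

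Next I would invoke Lemma~\ref{lem:rank} at the point $(\hat y, i)\in\interior X\cap\supp\mu_*$: conditions \eqref{e:pos} and \eqref{e:rank} are assumed to hold there, so the lemma produces open neighbourhoods $U_{\hat y}\subset Y$ of $\hat y$ and $U_{\hat w}\subset Y$ of $\hat w$, and a constant $\bar c>0$, such that \eqref{e:small1} holds. Setting $U:=U_{\hat y}$, $V:=U_{\hat w}$, this is precisely the first hypothesis \eqref{e1:prop2} of Proposition~\ref{prop:prop2} (with the given $n\geq d$). It remains to produce a set $\widetilde X\in\mathcal B(X)$ with $\mu_*(\widetilde X)>0$, an integer $m$, and $\delta>0$ so that $P^m(x, U_{\hat y}\times\{i\})\geq\delta$ for all $x\in\widetilde X$.

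For this last point I would argue as follows. Since $(\hat y, i)\in\supp\mu_*$, every open neighbourhood of it has positive $\mu_*$-measure; in particular $\mu_*(U_{\hat y}\times\{i\})>0$ (shrinking $U_{\hat y}$ if necessary so that it remains the neighbourhood from Lemma~\ref{lem:rank}, or using any smaller open neighbourhood $V_0\subset U_{\hat y}$ of $\hat y$, for which \eqref{e:small1} still holds with $B$ replaced by $B$ and $U$ by $V_0\supset$-reasoning). Now fix such an open neighbourhood $O:=V_0\times\{i\}$ with $\mu_*(O)>0$ and $O\subset U_{\hat y}\times\{i\}$. By the Portmanteau theorem \cite[Theorem 2.1]{b:bil}, for every $x\in X$ we have $\liminf_{m\to\infty}P^m\delta_x(O)\geq\liminf_{m\to\infty} P^m\delta_x(\interior O)\geq\mu_*(\interior O)=\mu_*(O)>0$, since $O$ is open and $P^m\delta_x\to\mu_*$ weakly. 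Fix any point $x_0\in X$; then there is $m\in\n$ with $P^m(x_0,U_{\hat y}\times\{i\})\geq P^m\delta_{x_0}(O)\geq \tfrac12\mu_*(O)=:\delta>0$. Taking $\widetilde X:=\{x_0\}$ does not work directly since we need $\mu_*(\widetilde X)>0$; instead, I would use the Feller property: the map $x\mapsto P^m\delta_x(O)$ need not be lower semicontinuous in general, so a cleaner route is to observe that by asymptotic stability one actually has, for \emph{each} $x$, some $m_x$ with $P^{m_x}\delta_x(O)>\delta$; then on the set $\widetilde X_m:=\{x: P^m\delta_x(O)>\delta\}$ one has $X=\bigcup_m \widetilde X_m$, so $\mu_*(\widetilde X_m)>0$ for some $m$, and this $\widetilde X_m$ is the desired set (its measurability follows from measurability of $x\mapsto P^m(x,\cdot)$). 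Applying Proposition~\ref{prop:prop2} with this $\widetilde X_m$, $m$, $\delta$, $U=U_{\hat y}$, $V=U_{\hat w}$, $n$, $\bar c$ then shows the absolutely continuous part of $\mu_*$ is non-trivial, and the dichotomy from Theorem~\ref{cor:2}(i) upgrades this to $\mu_*\ll\bl$; finally Theorem~\ref{cor:2}(ii) (with $\nu_*=G\mu_*$) gives $G\mu_*\ll\bl$ as well.

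The main obstacle I anticipate is the measurability and uniform-lower-bound bookkeeping for $\widetilde X$: one needs to convert the pointwise convergence $P^m\delta_x\to\mu_*$ into a statement valid on a set of positive $\mu_*$-measure with a \emph{single} fixed $m$ and a \emph{single} $\delta$. The decomposition $X=\bigcup_{m\in\n}\{x:P^m\delta_x(V_0\times\{i\})>\delta\}$ (valid for $\delta<\mu_*(V_0\times\{i\})$ by the Portmanteau bound, which holds at every $x$) together with countable subadditivity of $\mu_*$ resolves this cleanly, but it must be stated carefully. Everything else — the passage from $\hat y\in\supp\mu_*$ to positivity of $\mu_*$ on small neighbourhoods, the verification that the Lemma~\ref{lem:rank} conclusion is exactly \eqref{e1:prop2}, and the final appeal to Theorem~\ref{cor:2} — is routine.
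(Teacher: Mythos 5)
Your argument is correct and follows the paper's skeleton — Lemma~\ref{lem:rank} supplies the $(n,\ell_d|_{U_{\hat w}})$-small set $U_{\hat y}\times\{i\}$, the Portmanteau theorem supplies accessibility, and Proposition~\ref{prop:prop2} (whose second part already contains the appeal to Theorem~\ref{cor:2}) finishes the proof — but you handle the one delicate step, producing a single $m$, a single $\delta>0$ and a measurable set $\widetilde X$ with $\mu_*(\widetilde X)>0$ on which $P^m(\cdot\,,U_{\hat y}\times\{i\})\geq\delta$, differently from the paper. The paper stays local at $\hat x=(\hat y,i)$: since $\hat x\in\supp\mu_*$, Portmanteau gives $P^m(\hat x,U_{\hat y}\times\{i\})>\delta_*/2$ for some fixed $m$, and then the Feller property of $P$ (which does hold here, by continuity of $S_i$, $w_\theta$, $p_\theta$, $\pi_{ij}$) makes $x\mapsto P^m(x,U_{\hat y}\times\{i\})$ lower semicontinuous for the open set $U_{\hat y}\times\{i\}$, yielding an \emph{open} neighbourhood $\widetilde X$ of $\hat x$ on which the bound persists, with $\mu_*(\widetilde X)>0$ again because $\hat x\in\supp\mu_*$. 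Your route instead fixes $\delta<\mu_*(U_{\hat y}\times\{i\})$, uses the pointwise Portmanteau bound at \emph{every} $x\in X$ to write $X=\bigcup_{m\in\n}\{x:P^m(x,U_{\hat y}\times\{i\})>\delta\}$, and extracts a positive-measure member of this countable cover; this needs only Borel measurability of the kernel, not the Feller property, so it is marginally more robust (and your shrinking to $V_0$ is unnecessary — $U_{\hat y}\times\{i\}$ itself is open with positive $\mu_*$-measure). One small correction to your aside: in this model $x\mapsto P^m\delta_x(O)$ \emph{is} lower semicontinuous for open $O$, precisely because $P$ is Feller, and that is exactly the shortcut the paper takes; your dismissal of that route is unwarranted, though it costs you nothing since your covering argument is complete and valid.
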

\begin{proof}
In the light of Proposition \ref {prop:prop2} and Lemma \ref{lem:rank}, it suffices to show that \eqref{e2:prop2} holds for $U=U_{\hat{y}}$ and the given $i$. Since $\hat{x}:=(\hat{y},i)\in \operatorname{supp}\mu_*$, it follows that \hbox{$\delta_{*}:=\mu_*(U\times\{i\})>0$}. Taking into account that $P^n(x,\cdot)\stackrel{w}{\to} \mu_*$ for any $x\in X$, we can apply the Portmanteau theorem \hbox{(\cite[Theorem 2.1]{b:bil})} to deduce that
$$\liminf_{n\to\infty} P^n(x,U\times\{i\})\geq \delta_{*}\quad\text{for any}\quad x\in X.$$
In particular, we therefore get $P^m(\hat{x},U\times\{i\})>\delta_*/2$ for some $m\in\mathbb{N}$. Since the operator $P$ is Feller, the map $X\ni x\mapsto P^m(x,U\times\{i\})$ is lower semicontinuous, and thus there exists an open neighbourhood of $\hat{x}$, say $\widetilde{X}$, such that $P^m(x,U\times\{i\})>\delta_*/3$ for any $x\in\widetilde{X}$. Moreover, $\mu_*(\widetilde{X})>0$, since $\hat{x}\in\supp\mu_*$. This shows that \eqref{e2:prop2} is indeed satisfied (with $\delta=\delta_*/3$) and completes the proof.
\end{proof}

The requirement $(\hat{y},i)\in \operatorname{supp}\mu_*$ is rather implicit and difficult to verify without any additional information regarding the measure $\mu_*$. Moreover, the above-stated results are limited by the assumption that the underlying operator is asymptotically stable. In the remainder of the paper, we therefore derive a somewhat more practical result, which does not require the stability, and enables one to establish the uniform accessibility of $U_{\hat{y}}\times\{i\}$ in the sense of \eqref{e2:prop2}, using a more intuitive argument, which refers directly to the component functions of the model. More precisely, given  $(\hat{y},i)\in \interior X$, we shall use the following condition:
\begin{itemize}
\phantomsection
\label{a}
\item[(A)] For any open neighbourhood $V_{\hat{y}}$ of $\hat{y}$ and any $(y,j)\in X$, there exist $n\in\n$, $\bs{n}\in\mathbb{R}_+^n$, $\bt{n}\in \Theta^n$ and, whenever $n>1$, also $\bj{n-1}\in I^{n-1}$, such that
$$
\mathcal{W}_n(y,\,(j,\bj{n-1}),\bs{n},\bt{n})\in V_{\hat{y}}\quad\text{and}\quad\pp_n(y,(j,\bj{n-1}),\bs{n},\bt{n})\Pi_n(y,(j,\bj{n-1},i),\bs{n},\bt{n})>0.$$
\end{itemize}
The following lemma, which is essentially based on \cite[Lemma 3.16]{b:benaim1}, should be treated as an intermediate result on the way to the above-mentioned  implication  $\hyperref[a]{(A)}\Rightarrow   \eqref{e2:prop2}$.
\begin{lem}\label{lem:3}
Let $\mu\in\mathcal{M}_{fin}(X)$ be an arbitrary non-zero measure. Further, suppose that condition~\hyperref[a]{(A)} holds for some $(\hat{y},i)\in \interior X$, and let $U_{\hat{y}}\subset Y$ be an arbitrary open neighbourhood of $\hat{y}$.  Then, there exist constants $\varepsilon>0,\;\beta>0,\;m\in\n$, sequences $\bbbj{m-1}\in I^{m-1}$, $\bbbs{m}\in\mathbb{R}_+^m,\;\bbbt{m}\in\Theta^m$ (the former only if $m>1$) and an open set $\widetilde{X}\subset X$ with $\mu(\widetilde{X})>0$ such that
\begin{equation}\label{e:ac}
\begin{gathered}
\mathcal{W}_m(y,(j,\bbbj{m-1}),\bs{m},\bt{m})\in U_{\hat{y}},\\
\pp_m(y,(j,\bbbj{m-1}),\bs{m},\bt{m})\Pi_m(y,(j,\bbbj{m-1},i),\bs{m},\bt{m})>\beta,
\end{gathered}
\end{equation}
whenever $(y,j)\in \widetilde{X}$, $\bs{m}\in B_{\mathbb{R}_+}(\bbbs{m},\varepsilon)$ and $\bt{m}\in B_{\Theta}(\bbbt{m},\varepsilon)$, where
$$B_{\mathbb{R}_+}(\bbbs{m},\varepsilon):=\{\bs{m}\in \mathbb{R}_+^m:\;\norma{\bs{m}-\bbbs{m}}_m<\varepsilon\}\vspace{-0.1cm},$$
$$B_{\Theta}(\bbbt{m},\varepsilon):=
\begin{cases}
\{\bt{m}\in \Theta^m:\;\|\bt{m}-\bbbt{m}\|_m<\varepsilon\} &\text{if}\;\; \Theta\;\; \text{is an interval,}\\
\{\bbbt{m}\} &\text{if}\;\;\Theta\;\;\text{is finite},
\end{cases}
$$
and $\norma{\cdot}_m$ stands for the Euclidean norm in $\mathbb{R}^m$.
\end{lem}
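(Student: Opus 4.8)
The plan is to deduce the uniform conclusion \eqref{e:ac} on a set of positive $\mu$-measure from the pointwise hypothesis \hyperref[a]{(A)} by a Lindel\"of covering argument, the only non-formal point being that continuity of the component functions lets one keep a \emph{single} intermediate index path fixed across a whole neighbourhood of a base point. First I would record two elementary observations: since $I$ carries the discrete topology, $\interior X=\interior Y\times I$ and every point $(y_0,j_0)\in X$ has a basic open neighbourhood of the form $V\times\{j_0\}$ with $V\subset Y$ open; and, for each $n\in\n$ and each fixed path $\bj{n-1}\in I^{n-1}$, the maps $(y,\bs{n},\bt{n})\mapsto\ww_n(y,(j_0,\bj{n-1}),\bs{n},\bt{n})$ and $(y,\bs{n},\bt{n})\mapsto\pp_n(y,(j_0,\bj{n-1}),\bs{n},\bt{n})\,\Pi_n(y,(j_0,\bj{n-1},i),\bs{n},\bt{n})$ are continuous on $Y\times\mathbb{R}_+^n\times\Theta^n$, being obtained by composition from the continuous functions $S_k$, $w_\theta$, $p_\theta$ and $\pi_{kl}$.

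Next I would fix an arbitrary $x_0=(y_0,j_0)\in X$ and apply condition \hyperref[a]{(A)} with $U_{\hat y}$ playing the role of $V_{\hat y}$. This furnishes an integer $n_0=n_0(x_0)$, a path $\bj{n_0-1}(x_0)\in I^{n_0-1}$ (vacuous if $n_0=1$) and sequences $\bs{n_0}(x_0)\in\mathbb{R}_+^{n_0}$, $\bt{n_0}(x_0)\in\Theta^{n_0}$ for which $\ww_{n_0}(y_0,(j_0,\bj{n_0-1}(x_0)),\bs{n_0}(x_0),\bt{n_0}(x_0))\in U_{\hat y}$ and the product $\pp_{n_0}\Pi_{n_0}$, evaluated at these same arguments with the terminal index $i$ appended in $\Pi_{n_0}$, is strictly positive. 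Invoking the openness of $U_{\hat y}$ and the joint continuity recorded above, I can then pick an open neighbourhood $V_{x_0}\subset Y$ of $y_0$, a radius $\varepsilon_{x_0}>0$ and a constant $\beta_{x_0}>0$ so small that, for every $y\in V_{x_0}$, $\bs{n_0}\in B_{\mathbb{R}_+}(\bs{n_0}(x_0),\varepsilon_{x_0})$ and $\bt{n_0}\in B_{\Theta}(\bt{n_0}(x_0),\varepsilon_{x_0})$, one still has $\ww_{n_0}(y,(j_0,\bj{n_0-1}(x_0)),\bs{n_0},\bt{n_0})\in U_{\hat y}$ and $\pp_{n_0}(y,\ldots)\,\Pi_{n_0}(y,\ldots)>\beta_{x_0}$ --- with $B_{\mathbb{R}_+}$, $B_{\Theta}$ understood as in the statement, so that in the finite-$\Theta$ case $B_{\Theta}(\bt{n_0}(x_0),\varepsilon_{x_0})$ collapses to the single point $\bt{n_0}(x_0)$ and nothing has to be checked in that variable.

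Finally I would note that $\{V_{x_0}\times\{j_0\}:\,x_0\in X\}$ is an open cover of $X$; as $X$ is separable metric, hence Lindel\"of, it admits a countable subcover $\{V_{x_k}\times\{j_k\}\}_{k\in\n}$, and since $\mu$ is non-zero we have $\mu(X)>0$, so countable subadditivity yields $\mu(V_{x_k}\times\{j_k\})>0$ for at least one $k$. Fixing such a $k$ and putting $m:=n_0(x_k)$, $\bbbj{m-1}:=\bj{m-1}(x_k)$, $\bbbs{m}:=\bs{m}(x_k)$, $\bbbt{m}:=\bt{m}(x_k)$, $\varepsilon:=\varepsilon_{x_k}$, $\beta:=\beta_{x_k}$ and $\widetilde X:=V_{x_k}\times\{j_k\}$ (an open subset of $X$) delivers all the objects demanded by the lemma, because any $(y,j)\in\widetilde X$ satisfies $j=j_k$ and $y\in V_{x_k}$, which is precisely the regime in which \eqref{e:ac} was secured in the previous step. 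I expect no serious obstacle here: the whole content of the lemma is the passage from the pointwise \hyperref[a]{(A)} to a uniform statement over a positive-measure set, handled by continuity of the component functions together with the Lindel\"of property of $X$ and countable subadditivity of $\mu$; the one point requiring a little care is organizational, namely that the intermediate path $\bbbj{m-1}$ must be chosen once and for all at the base point $x_k$ and then left unchanged over $\widetilde X$, which is what forces us to apply \hyperref[a]{(A)} at a single point and to use continuity jointly in $y$, $\bs{m}$ and $\bt{m}$ rather than only in $y$.
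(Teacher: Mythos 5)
Your proof is correct and follows the same basic strategy as the paper's: apply condition \hyperref[a]{(A)} pointwise, use continuity of $\ww_n$ and of $\pp_n\Pi_n$ to produce open sets on which the two conditions persist, invoke the Lindel\"of property of $X$ to pass to a countable subcover, and use $\mu(X)>0$ with countable subadditivity to pick one element of positive measure. The only real difference is where the $\varepsilon$-ball in $(\bs{m},\bt{m})$ enters. The paper covers $X$ by the sets $O(\bj{k-1},\bs{k},\bt{k},\beta')$ on which the conditions hold at the \emph{exact} parameter values, with the target taken to be a smaller neighbourhood $V_{\hat y}$ whose compact closure lies in $U_{\hat y}$; only after the positive-measure element $\widetilde X$ has been selected does it shrink to an $\varepsilon$-neighbourhood of $(\bbbs{m},\bbbt{m})$, invoking the positive distance between $V_{\hat y}$ and $U_{\hat y}^c$ together with continuity. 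You instead build the $\varepsilon$-ball into each cover element from the outset, by choosing a product neighbourhood of $(y_0,\bs{n_0}(x_0),\bt{n_0}(x_0))$ inside the preimage of the open set $U_{\hat y}\times(\beta_{x_0},\infty)$ under the joint map; this makes the uniformity of $\varepsilon$ and $\beta$ over $\widetilde X=V_{x_k}\times\{j_k\}$ automatic and lets you dispense with the auxiliary $V_{\hat y}$ and the compactness/gap step, at the mild cost of indexing the cover by points of $X$ rather than by parameter tuples. Both routes deliver the lemma; yours is arguably a little cleaner at the uniformization step, since continuity is exploited on a genuine product neighbourhood of a single point rather than over the whole (possibly non-compact) set $\widetilde X$.
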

\begin{proof}
For any $k\in\n$, let $\mathcal{A}_k$ denote the set of all $(\bj{k-1},\bs{k},\bt{k},\beta')$, where $\bj{k-1}\in I^{k-1}$, $\bs{k}\in\mathbb{R}_+^k$, $\bt{k}\in\Theta^k$ and $\beta'>0$ (excluding the first member whenever $k=1$). Further, let $V_{\hat{y}}$ be a bounded open neighbourhood of $\hat{y}$ such that $\operatorname{cl}V_{\hat{y}}\subset U_{\hat{y}}$, and define
$$O(\bj{k-1},\bs{k},\bt{k},\beta'):=\{
(y,j)\in X:\,\mathcal{W}_k(y,(j,\bj{k-1}),\bs{k},\bt{k})\in V_{\hat{y}},\;\mathcal{Q}_k(y,(j,\bj{k-1},i),\bs{k},\bt{k})>\beta' \},$$
with
$$\mathcal{Q}_k(y,(j,\bj{k-1},i),\bs{k},\bt{k}):=\pp_k(y,(j,\bj{k-1}),\bs{k},\bt{k})\Pi_k(y,(j,\bj{k-1},i),\bs{k},\bt{k}),$$
for any $k\in\n$ and $(\bj{k-1},\bs{k},\bt{k},\beta')\in\mathcal{A}_k$.

Obviously, by continuity of the functions underlying the model, all the sets $\mathcal{O}(\cdot)$ are open. Hence, from hypothesis \hyperref[a]{(A)} it follows that 
$$\mathcal{V}:=\{O(\bj{k-1},\bs{k},\bt{k},\beta'):\,k\in\n,\; (\bj{k-1},\bs{k},\bt{k},\beta')\in\mathcal{A}_k\}.$$
is an open cover of $X$. 

Since $X$ is a Lindel\"of space (as a separable metric space) there exists a countable subcover of~$\mathcal{V}$. Consequently, we can choose sequences $\{k_r\}_{r\in\n}\subset\mathbb{N}$ and $\left\{\left(\bj{k_r-1}^{(r)},\bs{k_r}^{(r)},\bt{k_r}^{(r)},\beta_r\right)\right\}_{r\in\n}$, wherein $\left(\bj{k_r-1}^{(r)},\bs{k_r}^{(r)},\bt{k_r}^{(r)},\beta_r\right)\in\mathcal{A}_{k_r}$ for any $r\in\mathbb{N}$, so that
$$X= \bigcup_{r\in\n} O\left(\bj{k_r-1}^{(r)},\bs{k_r}^{(r)},\bt{k_r}^{(r)},\beta_r\right).$$
Now, taking into account that $\mu(X)>0$, we may find $p\in\n$ such that
$$\mu\left(O\left(\bj{k_p-1}^{(p)},\bs{k_p}^{(p)},\bt{k_p}^{(p)},\beta_p\right)\right)>0.$$
Define 
$$m:=k_p,\;\;(\bbbj{m-1},\bbbs{m},\bbbt{m},\bar{\beta}):=\left(\bj{k_p-1}^{(p)},\bs{k_p}^{(p)},\bt{k_p}^{(p)},\beta_p\right),\;\;\widetilde{X}:=O\left(\bbbj{m-1},\bbbs{m},\bbbt{m},\bar{\beta}\right).$$
Clearly, we then have
$$\mathcal{W}_m(y,(j,\bbbj{m-1}),\bbbs{m},\bbbt{m})\in V_{\hat{y}}\quad\text{and}\quad\mathcal{Q}_m(y,(j,\bbbj{m-1},i),\bbbs{m},\bbbt{m})>\bar{\beta}\quad\text{for any}\quad (y,j)\in \widetilde{X}.$$

Since $\operatorname{cl} V_{\hat{y}}\cap U_{\hat{y}}^c=\emptyset$ and $\operatorname{cl} V_{\hat{y}}$ is compact, the distance between $V_{\hat{y}}$ and $U_{\hat{y}}^c$ is positive. This,\linebreak together with continuity of $\mathcal{W}_m$ and $\mathcal{Q}_m$ with respect to $y$, $\mathbf{t}_m$ and (if $\Theta$ is an interval) $\bt{m}$, enables one to choose $\varepsilon>0$ so small that 
$$\mathcal{W}_m(y,(j,\bbbj{m-1}),\bs{m},\bt{m})\in U_{\hat{y}}\quad\text{and}\quad \mathcal{Q}_m(y,(j,\bbbj{m-1},i),\bs{m},\bt{m})>\bar{\beta}/2,$$ whenever $(y,j)\in \widetilde{X}$, $\bs{m}\in B_{\mathbb{R}_+}(\bbbs{m},\varepsilon)$ and $\bt{m}\in B_{\Theta}(\bbbt{m},\varepsilon)$. The proof is now complete.
\end{proof}

We are now in a position to establish the main result of this paper, which provides certain conditions sufficient for the absolute continuity of invariant measures for both the Markov operator~$P$ and the Markov semigroup~$\{P_t\}_{t\geq 0}$.

\begin{thm}\label{main:1}
Suppose that the transformations $w_{\theta}$,~\hbox{$\theta\in\Theta$}, and $S_k(t,\cdot)$, $k\in I$, are non-singular with respect to $\ell_d$. Further, assume that there exists a point $(\hat{y},i)\in \operatorname{int}X$ with property~\hyperref[a]{(A)}, for which
 \eqref{e:pos} and \eqref{e:rank} hold with some integer $n\geq d$ and some $(\bbj{n-1},\bbs{n},\bbt{n})\in I^{n-1}\times (0,\infty)^n\times\interior \Theta^n$ (excluding $\bbj{0}$ in the case of $n=1$). Then every ergodic invariant measure $\mu_*\in\mathcal{M}_{prob}(X)$ of the Markov operator $P$, induced by \eqref{e:kernel}, as well as the corresponding invariant measure $G\mu_*$ of the semigroup $\{P_t\}_{t\geq 0}$, generated by \eqref{e:kernel_cont}, is absolutely continuous with respect to~$\bar{\ell}_d$.
\end{thm}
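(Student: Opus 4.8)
The plan is to combine the three building blocks that have already been assembled: Lemma~\ref{lem:rank} (which produces an open $\bar{\ell}_d$-small set), Lemma~\ref{lem:3} (which upgrades condition \hyperref[a]{(A)} into a uniform accessibility statement for a suitable open set of positive measure), and Proposition~\ref{prop:prop2} (which turns ``small set + uniformly accessible'' into non-triviality of the absolutely continuous part, and then into full absolute continuity via Theorem~\ref{cor:2}). So fix a $P$-ergodic invariant probability measure $\mu_*$; the whole argument must yield, for this particular $\mu_*$, the two hypotheses \eqref{e1:prop2} and \eqref{e2:prop2} of Proposition~\ref{prop:prop2}.

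First I would apply Lemma~\ref{lem:rank} at the point $(\hat{y},i)\in\interior X$, using the assumed sequences $\bbj{n-1},\bbs{n},\bbt{n}$ and conditions \eqref{e:pos}, \eqref{e:rank}. This gives open neighbourhoods $U_{\hat{y}}\subset Y$ of $\hat{y}$ and $U_{\hat{w}}\subset Y$ of $\hat{w}=\mathcal{W}_n(\hat{y},(i,\bbj{n-1}),\bbs{n},\bbt{n})$, together with a constant $\bar c>0$, such that \eqref{e:small1} holds; that is exactly \eqref{e1:prop2} with $U=U_{\hat{y}}$, $V=U_{\hat{w}}$ and the given index $i$. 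Second, since $\mu_*$ is a non-zero finite measure on $X$ and condition \hyperref[a]{(A)} holds at $(\hat{y},i)$, I would apply Lemma~\ref{lem:3} with $\mu=\mu_*$ and with the open neighbourhood $U_{\hat{y}}$ just obtained. This produces $\varepsilon,\beta>0$, $m\in\n$, sequences $\bbbj{m-1},\bbbs{m},\bbbt{m}$, and an open set $\widetilde X\subset X$ with $\mu_*(\widetilde X)>0$ such that \eqref{e:ac} holds for all $(y,j)\in\widetilde X$, $\bs{m}\in B_{\mathbb{R}_+}(\bbbs{m},\varepsilon)$ and $\bt{m}\in B_{\Theta}(\bbbt{m},\varepsilon)$.

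The key step — the one I expect to require the most care — is deriving \eqref{e2:prop2} from the conclusion of Lemma~\ref{lem:3}, i.e.\ showing that $U_{\hat{y}}\times\{i\}$ is uniformly accessible from $\widetilde X$ in $m$ steps. For any $(y,j)\in\widetilde X$ one bounds $P^m((y,j),U_{\hat{y}}\times\{i\})$ from below by restricting the sum over paths in the explicit $m$-step kernel \eqref{e:nstep} to the single path $\bbbj{m-1}$ (with terminal index $i$) and restricting the integrals over $\bs{m}$ and $\bt{m}$ to the balls $B_{\mathbb{R}_+}(\bbbs{m},\varepsilon)$ and $B_{\Theta}(\bbbt{m},\varepsilon)$. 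On that domain the indicator $\mathbbm{1}_{U_{\hat{y}}\times\{i\}}(\mathcal{W}_m(y,(j,\bbbj{m-1}),\bs{m},\bt{m}),i)$ equals $1$ by the first line of \eqref{e:ac}, while the product $\Pi_m\,\pp_m$ exceeds $\beta$ by the second line; the exponential factor $\lambda^m e^{-\lambda(t_1+\cdots+t_m)}$ is bounded below on the bounded set $B_{\mathbb{R}_+}(\bbbs{m},\varepsilon)$ by a positive constant, say $\kappa$. Hence
\begin{equation*}
P^m((y,j),U_{\hat{y}}\times\{i\})\;\geq\;\kappa\,\beta\,\ell_m\big(B_{\mathbb{R}_+}(\bbbs{m},\varepsilon)\big)\,\vartheta^{\otimes m}\big(B_{\Theta}(\bbbt{m},\varepsilon)\big)\;=:\;\delta\;>\;0,
\end{equation*}
where in the finite-$\Theta$ case one reads $\vartheta^{\otimes m}(B_{\Theta}(\bbbt{m},\varepsilon))=\vartheta^{\otimes m}(\{\bbbt{m}\})>0$, using that $\vartheta$ is the counting measure, and in the interval case one uses that $\vartheta$ is positive on non-empty open sets and that $\bbbs{m}\in\mathbb{R}_+^m$, $\bbbt{m}\in\Theta^m$ make the relevant balls (intersected with $\mathbb{R}_+^m$, resp.\ $\Theta^m$) of positive measure. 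A minor point to check here is that the ball $B_{\mathbb{R}_+}(\bbbs{m},\varepsilon)$ genuinely has positive $\ell_m$-measure, which is fine since it is the intersection of a Euclidean ball with the orthant and $\bbbs{m}$ can be taken in the interior by shrinking $\varepsilon$ (or the argument of Lemma~\ref{lem:3} already delivers this); likewise one should note $\widetilde X$ may be assumed contained in (or trimmed to) a subset of $\interior X$ if needed, though this is not required since \eqref{e2:prop2} only needs a lower bound on the kernel from $\widetilde X$.

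With \eqref{e1:prop2} and \eqref{e2:prop2} both verified, Proposition~\ref{prop:prop2} applies: the absolutely continuous part of $\mu_*$ is non-trivial, and since $\mu_*$ is ergodic and the non-singularity hypothesis of Theorem~\ref{cor:2} is in force, the dichotomy of Theorem~\ref{cor:2}(i) forces $\mu_*\ll\bar{\ell}_d$; finally Theorem~\ref{cor:2}(ii) (equivalently, the last sentence of Proposition~\ref{prop:prop2}) gives $G\mu_*\ll\bar{\ell}_d$ as well. Since $\mu_*$ was an arbitrary $P$-ergodic invariant probability measure, the conclusion holds for all of them, completing the proof.
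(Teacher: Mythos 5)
Your proposal is correct and follows essentially the same route as the paper's proof: Lemma~\ref{lem:rank} supplies \eqref{e1:prop2}, Lemma~\ref{lem:3} applied to $\mu_*$ combined with the explicit kernel \eqref{e:nstep} (restricting to the path $\bbbj{m-1}$ and the balls around $\bbbs{m}$, $\bbbt{m}$) supplies \eqref{e2:prop2}, and Proposition~\ref{prop:prop2} with Theorem~\ref{cor:2} finishes. The only cosmetic difference is that the paper keeps the factor $\int_{B(\bbbs{m},\varepsilon)}\lambda^m e^{-\lambda(t_1+\cdots+t_m)}\,d\bs{m}$ inside the lower bound rather than bounding the exponential by a constant $\kappa$, which changes nothing.
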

\begin{proof}
Let $\mu_*\in\mathcal{M}_{prob}(X)$ be an ergodic invariant propability measure of $P$. By virtue of Lemma~\ref{lem:rank} we can choose an open neighbourhood $U_{\hat{y}}\subset Y$ of $\hat{y}$, an open set \hbox{$U_{\hat{w}}\subset Y$} and a~constant $\bar{c}>0$ so that \eqref{e1:prop2} holds with $U=U_{\hat{y}}$, $V=U_{\hat{w}}$ and the given $i$, i.e.
$$P^n(x,B\times \{j\})\geq \bar{c}\ell_d(B\cap U_{\hat{w}})\quad\text{for any}\quad x\in U_{\hat{y}}\times \{i\},\;j\in I\;\;\text{and}\;\; B\in\mathcal{B}(Y).$$

On the other hand, in view of Lemma \ref{lem:3}, we may find $\varepsilon>0$, $\beta>0$, $m\in \n$, sequences $\bbbj{m-1}\in I^{m-1}$ (if $m>1$), $\bbbs{m}\in\mathbb{R}_+^m$ , $\bbbt{m}\in\Theta^m$ and an~open set $\widetilde{X}\subset X$ with $\mu_*(\widetilde{X})>0$ such that conditions \eqref{e:ac} hold for any $z=(y,j)\in \widetilde{X}$, $\bs{m}\in B_{\mathbb{R}_+}(\bbbs{m},\varepsilon)$ and $\bt{m}\in B_{\Theta}(\bbbt{m},\varepsilon)$. Hence, appealing to \eqref{e:nstep}, we see that\vspace{-0.2cm}
$$P^{m}(z,U_{\hat{y}}\times\{i\})\geq \beta\, \vartheta^{\otimes m}(B_{\Theta}(\bbbt{m},\varepsilon)) \int_{B(\bbbs{m},\varepsilon)} \lambda^m e^{-\lambda(t_1+\ldots+t_m)}\,d\bs{m}:=\delta>0 \quad\text{for any}\quad z\in\widetilde{X},\vspace{-0.2cm}$$
which exactly means that condition \eqref{e2:prop2} holds for $U=U_{\hat{y}}$ and the given $i$. The desired absolute continuity of $\mu_*$ and $G\mu_*$ now follows from Proposition \ref{prop:prop2}.
\end{proof}

Finally, as a straightforward consequence of Theorems \ref{main:1} and \ref{cor:2}(iii), we obtain the following conclusion:

\begin{cor}\label{cor:main}
Suppose that there exists a unique invariant probability measure for the operator~$P$ or, equivalently, for the semigroup $\{P_t\}_{t\geq 0}$. Then, under the hypotheses of Theorem \ref{main:1}, both the invariant measures, that for $P$, and that for $\{P_t\}_{t\geq 0}$, are absolutely continuous with respect to $\bl$.
\end{cor}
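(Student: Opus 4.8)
The plan is to derive Corollary~\ref{cor:main} directly from the two results it cites, so the proof is essentially a bookkeeping argument combining Theorem~\ref{main:1} with Theorem~\ref{cor:2}(iii). First I would observe that the hypotheses of Theorem~\ref{main:1} are assumed to hold, and in particular they include the non-singularity of every $w_{\theta}$ and every $S_k(t,\cdot)$ with respect to $\ell_d$; this is precisely the standing assumption needed to invoke Theorem~\ref{cor:2}. So the conclusions of Theorem~\ref{cor:2} are available.

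Next I would use the uniqueness hypothesis. If $\mu_*$ is the unique $P$-invariant probability measure, then by \cite[Corollary 7.17]{b:douc} (quoted in the preliminaries) it is automatically ergodic. Therefore Theorem~\ref{main:1} applies to $\mu_*$ and yields that $\mu_*$, and the corresponding measure $G\mu_*$, are absolutely continuous with respect to $\bl$. It remains only to identify $G\mu_*$ with the unique invariant probability measure of $\{P_t\}_{t\geq 0}$: by Theorem~\ref{thm:one-to-one}(i), $G\mu_*$ is $\{P_t\}_{t\geq 0}$-invariant, and the one-to-one correspondence of Theorem~\ref{thm:one-to-one} (together with the equivalence of the two uniqueness statements, already noted in the statement of the corollary) guarantees that it is the unique such measure. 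Hence both invariant measures are absolutely continuous with respect to $\bl$, which is the assertion.

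Alternatively, and perhaps more cleanly, I would route the last step through Theorem~\ref{cor:2}(iii): once $\mu_*$ is known to be absolutely continuous, part (iii) of that theorem states directly that the unique invariant probability measure $\nu_*$ of $\{P_t\}_{t\geq 0}$ is absolutely continuous with respect to $\bl$ if and only if $\mu_*$ is. This sidesteps the need to re-derive the identification $\nu_* = G\mu_*$ by hand. Either way, the entire argument is a two-line deduction: ``unique $\Rightarrow$ ergodic, apply Theorem~\ref{main:1}, then apply Theorem~\ref{cor:2}(iii).''

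There is no real obstacle here; the only point that needs a moment's care is making sure the non-singularity hypothesis of Theorem~\ref{cor:2} is genuinely subsumed by the hypotheses of Theorem~\ref{main:1} (it is, verbatim), and that the equivalence ``unique for $P$ $\Leftrightarrow$ unique for $\{P_t\}_{t\geq 0}$'' is legitimate — this follows from the bijection in Theorem~\ref{thm:one-to-one} and is already asserted in the corollary's statement. So the proof is short, and I would present it as a direct chaining of the cited results rather than reproving anything.
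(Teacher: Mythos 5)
Your proposal is correct and matches the paper's intended argument: the paper presents the corollary as a direct consequence of Theorem \ref{main:1} and Theorem \ref{cor:2}(iii), relying on the fact (noted in the preliminaries) that a unique invariant probability measure is automatically ergodic. Your chaining of "unique $\Rightarrow$ ergodic, apply Theorem \ref{main:1}, then Theorem \ref{cor:2}(iii) (or the correspondence of Theorem \ref{thm:one-to-one})" is exactly the same route.
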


\section{The existence and uniqueness of invariant measures}\label{sec:3} 
It is clear that to ensure the existence and uniqueness of an invariant  probability measure for the Markov operator $P$ (and therefore for the Markov semigroup $\{P_t\}_{t\geq 0}$), some additional restrictions should be imposed on the functions composing the model under consideration.

In what follows, we quote \cite[Theorem 4.1]{b:czapla_erg} (cf. also \cite[Theorem~4.1]{b:cz_kub}), which, apart from the existence of a unique $P$-invariant measure, also assures the geometric ergodicity of $P$ in the Fortet--Mourier distance on $\mathcal{M}_{prob}(X)$ (see e.g. \cite{b:las_frac} or \cite{b:dudley} for the equivalent Dudley metric). 

Assuming that $X=Y\times I$ is equipped with the metric of the form
\begin{equation}\label{e:rho_c}
\rho_c((u,i),(v,j))=\norma{u-v}+c\,\mathbf{d}(i,j)\quad\text{for any}\quad (u,i),(v,j)\in X,
\end{equation}
where $c$ is a given positive constant, the Forter--Mourier distance can be defined by
\begin{equation}
\label{def_dw}
d_{FM}(\mu,\nu):=\sup\left\{\left|\int_X f\,d(\mu-\nu)\,\right|:\;f\in \mathcal{F}_{FM}(X)\right\}\quad\text{for any}\quad\mu,\nu\in\mathcal{M}_{prob}(X),
\end{equation}
where
$$\mathcal{F}_{FM}(X):=\left\{f:X\to[0,1]:\,\, \sup_{x\neq y} \frac{|f(u)-f(v)|}{\rho_c(u,v)}\leq 1 \right\}.$$ 

It is well-known (see e.g. \cite[Theorem 8.3.2]{b:bogachev}) that the topology induced on $\mathcal{M}_{prob}(X)$ by $d_{FM}$ equals to the topology of weak convergence of probability measures (whenever $X$ is a Polish space). 

Before we formulate the above-mentioned stability result, let us emphasize that it holds with a sufficiently large constant~$c$, whose maginitude depends on the quantities occurring in the \hbox{hypotheses} to be imposed on the component functions of the model (see \cite[Section 6]{b:czapla_erg}). Let us also note that, although we have assumed that the metric on $Y$ is induced by a norm, just to stay with the framework introduced in Section \ref{sec:4} (wherein $Y$ is a closed subset of $\mathbb{R}^d$), the result remains valid for any Polish metric space (cf. \cite{b:cz_kub}).
\pagebreak
\begin{thm}[\textrm{\cite[Theorem 4.1]{b:czapla_erg}}]\label{t:erg_p}
Suppose that there exist $\alpha\in\mathbb{R}$, $L>0$ and $L_w>0$ satisfying 
\begin{equation}
\label{e:c0} LL_w+\frac{\alpha}{\lambda}<1,
\end{equation}
as well as $L_p$, $L_{\pi}$, $c_{\pi}$, $c_p>0$, a point $y^*\in Y$ and two Borel measurable functions: $\mathcal{L}:Y\to\mathbb{R}_+$, which is bounded on bounded sets, and $\varphi:\mathbb{R}_+\to\mathbb{R}_+$ satisfying
$$
\int_{\mathbb{R}_+} \varphi(t)e^{-\lambda t}\,dt<\infty,
$$
such that, for any $u,v\in Y$, the following conditions hold:
\begin{gather} 
\label{e:c1}
\|S_i(t,u) - S_i(t,v)\|\leq Le^{\alpha t}\|u-v\|\;\;\;\mbox{for any}\;\;\;i\in I,\;t\geq 0;\\
\label{e:c2}
\|S_i(t,u)- S_j(t,u)\|\leq \varphi(t)\mathcal{L}(u)\;\;\;\mbox{for any}\;\;\;i,j\in I,\;t\geq 0;\\
\label{e:c3} \sup_{y\in Y}\int_{\Theta}\int_0^{\infty}e^{-\lambda t}\|w_{\theta}(S_i(t,y^*))-y^*\|\,p_{\theta}(S_i(t,y))\,dt\,\vartheta(d\theta)<\infty\;\;\;\text{for any}\;\;\;i\in I;\\
\label{e:c4} \int_{\Theta}\|w_{\theta}(u)-w_{\theta}(v)\|\,p_{\theta}(u)\,\vartheta(d\theta)  \leq L_w\rho(u,v);\\
\label{e:c5} \int_{\Theta}|p_{\theta}(u)-p_{\theta}(v)|\,\vartheta(d\theta) \leq L_p\rho(u,v);\\ 
\label{e:c6} \sum_{k\in I} \min\{\pi_{i k}(u), \pi_{j k}(u)\}\geq c_{\pi}\;\;\text{for any}\;\; i,j\in I,\;\;\text{and}\;\; \int_{\Theta(u,v)}\min\{p_{\theta}(u),p_{\theta}(v)\}\,\vartheta(d\theta)\geq c_p,
\end{gather}
where $$\Theta(u,v):=\{\theta\in\Theta:\,\|w_{\theta}(u)-w_{\theta}(v)\| \leq L_w\rho(u,v)\}.$$ Then the Markov operator $P$ generated by \eqref{e:kernel} admits a unique invariant distribution $\mu_*$ such that \hbox{$\mu_*\in\mathcal{M}_{prob}^1(X)$}. Moreover, there exists $\beta\in(0,1)$ such that, for any $\mu\in\mathcal{M}_{prob}^1(X)$ and some constant $C(\mu)\in\mathbb{R}$, we have
\begin{equation} \label{spec_gap} d_{FM}(P^n \mu , \mu_*)\leq C(\mu)\beta^n \quad\text{for any}\quad n\in\n.
\end{equation}
In particular, $P$ is then also asymptotically stable (cf. Remark \ref{rem:asymp}).
\end{thm}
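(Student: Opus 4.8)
The statement is quoted verbatim from \cite[Theorem~4.1]{b:czapla_erg}, so a complete proof lies outside the scope of the present paper; nevertheless, let me indicate the route one would follow. The natural approach is a coupling argument combined with a Lyapunov (drift) condition, in the spirit of the Hairer--Mattingly form of Harris' theorem, adapted to the randomly switched post-jump chain $\{\Phi_n\}_{n\in\n_0}$ with transition kernel $P$ given by \eqref{e:kernel} (the conclusion for the semigroup $\{P_t\}_{t\ge 0}$ then follows through the correspondence of Theorem~\ref{thm:one-to-one}). The first step is a drift estimate: with $V(y,i):=\norma{y-y^*}$, conditions \eqref{e:c1} and \eqref{e:c3}, together with $\ew[e^{\alpha\Delta\tau_n}]=\lambda/(\lambda-\alpha)<\infty$ (finite because \eqref{e:c0} forces $\alpha<\lambda$), yield $PV\le aV+b$ for some $a\in(0,1)$ and $b<\infty$; note that \eqref{e:c0} is exactly the inequality $LL_w\lambda/(\lambda-\alpha)<1$. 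This drift already delivers tightness and the membership $\mu_*\in\mathcal{M}^1_{prob}(X)$ of any invariant measure, and provides exponential control of $\ew V(\Phi_n)$.

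The heart of the argument is an explicit Markovian coupling of two copies $\{(Y_n,\xi_n)\}$ and $\{(Y'_n,\xi'_n)\}$ of the chain, started from arbitrary $(u,i),(v,j)\in X$. One takes the inter-jump times $\Delta\tau_n$ common to both chains; given the pre-jump states, one draws $\theta_n$ via a maximal coupling of the selection densities $p_\theta(\cdot)$ evaluated at the two pre-jump states, whose overlap mass is bounded below on the set $\Theta(\cdot,\cdot)$ by $c_p$ from \eqref{e:c6}; and one couples the new discrete coordinates $\xi_n,\xi'_n$ using the minorization $\sum_k\min\{\pi_{ik},\pi_{jk}\}\ge c_\pi$, also from \eqref{e:c6}. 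On the ``good'' event $G_n$ that both the $\theta$- and the $\xi$-couplings succeed --- which, thanks to \eqref{e:c6} and the Lipschitz bound \eqref{e:c5} on the densities, has conditional probability bounded below by a positive constant uniform in the current states --- one has $\xi_n=\xi'_n$ and $\theta_n=\theta'_n$, so combining \eqref{e:c1} with \eqref{e:c4} and averaging over $\Delta\tau_n$ gives the one-step contraction $\ew[\,\norma{Y_n-Y'_n}\,\mathbbm{1}_{G_n}\mid\mathcal{G}_{n-1}]\le (LL_w\lambda/(\lambda-\alpha))\,\norma{Y_{n-1}-Y'_{n-1}}$ with factor strictly below $1$; when $\xi_{n-1}\ne\xi'_{n-1}$, the discrepancy between the two semiflows is absorbed using \eqref{e:c2} and the integrability $\int_{\mathbb{R}_+}\varphi(t)e^{-\lambda t}\,dt<\infty$. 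On the complementary ``bad'' event the distance cannot explode: \eqref{e:c1}, \eqref{e:c2} and \eqref{e:c4} bound it by a linear expression in $V(Y_{n-1},\xi_{n-1})+V(Y'_{n-1},\xi'_{n-1})+1$.

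These two ingredients are then glued together through a distance-like functional on $\mathcal{M}^1_{prob}(X)$ of Hairer--Mattingly type, for instance
\[
\widetilde{d}(\mu,\nu):=\inf_{(X,X')}\ew\Big[\big(1+\beta'\big(V(X)+V(X')\big)\big)\min\{1,\beta'\rho_c(X,X')\}\Big],
\]
the infimum being over all couplings $(X,X')$ of $\mu$ and $\nu$, with $\rho_c$ the metric \eqref{e:rho_c} and $\beta'>0$ a tuning parameter. Feeding the above coupling into the right-hand side, the contraction on $G_n$ (geometric on the $Y$-component, and a genuine collapse $\mathbf{d}(\xi_n,\xi'_n)=0$ on the $I$-component, the latter weighted by $c$) and the drift on the bad event combine --- provided $c$ is chosen large enough and $\beta'$ small enough --- to give $\widetilde{d}(P\mu,P\nu)\le\gamma\,\widetilde{d}(\mu,\nu)$ for some $\gamma\in(0,1)$. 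Since $(\mathcal{M}^1_{prob}(X),\widetilde{d})$ is complete, Banach's fixed point theorem produces the unique invariant $\mu_*\in\mathcal{M}^1_{prob}(X)$; iterating the contraction and comparing $\widetilde{d}$ with $d_{FM}$ (which it dominates up to a multiplicative constant, since $\min\{1,\beta'\rho_c\}$ controls the oscillation of functions in $\mathcal{F}_{FM}(X)$) yields the geometric bound \eqref{spec_gap}, and asymptotic stability follows from the Feller property via Remark~\ref{rem:asymp}.

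The delicate point --- and the main obstacle --- is the simultaneous handling of the two components of $X$: the continuous coordinate contracts geometrically (this is precisely what \eqref{e:c0} encodes), but the discrete coordinate satisfies only a minorization and is \emph{not} contractive, so it must instead be made to couple in a single step with uniformly positive probability; moreover the place-dependence of $\pi_{ij}$ and $p_\theta$ prevents any coupling from succeeding with probability one, so the residual failure probability has to be balanced against the drift. Making the contraction $\widetilde{d}(P\mu,P\nu)\le\gamma\,\widetilde{d}(\mu,\nu)$ actually close therefore requires a careful quantitative choice of the constant $c$ in \eqref{e:rho_c} and of the weight $\beta'$, and it is here that the ``sufficiently large'' magnitude of $c$ alluded to before the statement enters. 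The full bookkeeping is carried out in \cite[Section~6]{b:czapla_erg}.
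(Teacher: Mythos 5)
This theorem is not proved in the paper at all: it is imported verbatim from \cite{b:czapla_erg} (the text says explicitly that the result is quoted, and only remarks that the proof there goes through unchanged when $\varphi(t)=t$ is replaced by any $\varphi$ with $\int_{\mathbb{R}_+}\varphi(t)e^{-\lambda t}\,dt<\infty$), so there is no internal argument to compare yours against, and your decision to defer the full bookkeeping to the cited reference matches the paper's own treatment. Your sketch of how that external proof runs is sound in outline and gets the quantitative points right: \eqref{e:c0} is indeed equivalent to $LL_w\lambda/(\lambda-\alpha)<1$ (and forces $\alpha<\lambda$, so $\mathbb{E}[e^{\alpha\Delta\tau_n}]=\lambda/(\lambda-\alpha)$ is finite), the drift $PV\leq aV+b$ with $V(y,i)=\|y-y^*\|$ does follow from \eqref{e:c1}, \eqref{e:c3}, \eqref{e:c4}, and the good event of the coupling has probability bounded below by a constant of order $c_\pi c_p$ thanks to \eqref{e:c6}. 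One caveat worth noting: since the coupled $\theta$-draws only match the transformation applied to \emph{nearby} (not equal) pre-jump states, no total-variation minorization is available, so the classical Harris/Hairer--Mattingly theorem does not apply directly; what you actually describe --- contraction of a distance-like functional on the good event balanced against the drift on the bad one --- is the weak-Harris/asymptotic-coupling variant, which is the right tool. The proof in \cite{b:czapla_erg} is organized somewhat differently: rather than a Banach fixed-point argument for a weighted Hairer--Mattingly metric on $\mathcal{M}^1_{prob}(X)$, it constructs an explicit coupling measure supported near the diagonal and invokes the exponential-ergodicity criterion of Kapica and \'{S}l\k{e}czka, which yields the Fortet--Mourier bound \eqref{spec_gap} directly; the two routes are close relatives, and the ``sufficiently large $c$'' in \eqref{e:rho_c} plays in both the role you identify, namely weighting the non-contractive discrete coordinate so that its one-step coupling probability can be traded off against the geometric contraction of the continuous one.
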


Obviously, due to Theorem \ref{thm:one-to-one}, the hypotheses of Theorem \ref{t:erg_p} also guarantee the existence and uniqueness of a probability invariant measure for the semigroup $\{P_t\}_{t\geq 0}$, generated by \eqref{e:kernel_cont}.
\begin{rem}
In paper \cite{b:czapla_erg}, the above-stated theorem is proved under the assumption that \eqref{e:c1} holds with $\varphi(t)=t$. It   is, however, easy to check that the same proof  works without any significant changes if $\varphi:\mathbb{R}_+\to\mathbb{R}_+$ is an arbitrary function such that $t\mapsto \varphi(t)\exp(-\lambda t)$ is integrable over~$\mathbb{R}_+$.
\end{rem}
\begin{rem}\label{rem:simple}
It is easy to verify (cf. \cite[Corollary 3.4]{b:czapla_erg}) that, if $\Theta$ is compact and there exists a~Borel measurable function $\psi:\mathbb{R}_+\to\mathbb{R}_+$ such that 
$$
\int_{\mathbb{R}_+} \psi(t)e^{-\lambda t}\,dt<\infty\quad\text{and}\quad\|S_j(t,y^*)-y^*\|\leq \psi(t)\quad\text{for any}\quad t\geq 0,\;j\in I,
$$
then \eqref{e:c3} holds under each of the following two conditions:
\begin{itemize}
\item[(i)] The probabilities $p_{\theta}$ are constant (i.e. they do not depend on $y\in Y$) and \eqref{e:c4} is fulfilled.
\item[(ii)] There exists $L_w>0$ such that all $w_{\theta}$, $\theta\in\Theta$, are Lipschitz continuous with the same constant~$L_w$.
\end{itemize}
\end{rem}

%Although not needed here, it is worth to stress that, due to {\color{red} [....]}, the PDMP under study is exponentially ergodic in $d_{FM}$ whenever the probabilities $\pi_{ij}$ are state-independent.

%\begin{thm}
%Suppose that conditions \eqref{e:c0}-\eqref{e:c6} hold, and that
%$$\int_0^{\infty} e^{-\lambda t}\rho(S_i(t,y^*),y^*)\,dt<\infty\quad \text{for any}\quad i\in I.$$
%Then there exists a unique invariant probability measure $\nu_*$ for the semigroup $\{P_t\}_{t\geq 0}$ such that $\nu_*\in\mathcal{M}_{prob}^1(X)$. Moreover, assuming that all the functions $\pi_{ij}$, $i,j\in I$, are constant, there exists $\gamma>0$ such that for any $\nu\in\mathcal{M}_{prob}^1(X)$ and some $\bar{C}(\nu)\in\mathbb{R}$ we have
%\begin{equation} \label{spec_gap_cont} d_{FM}(P_t \nu, \nu_*)\leq \bar{C}(\nu)e^{-\gamma t} \quad\text{for any}\quad t\geq 0.
%\end{equation}
%\end{thm}

\section{Examples}\label{sec:5}
In this section, we shall illustrate the applicability of Theorem \ref{main:1} by analysing a simple example, inspired by \cite[Example 5.2]{b:benaim1}, wherein also the hypotheses of Theorem \ref{t:erg_p} are fulfilled. Before proceeding to this, let us, however, discuss some special cases wherein condition \hyperref[a]{(A)}, introduced prior to Lemma \ref{lem:3}, is fulfilled for some identifiable point of $X$.
\begin{rem}\label{rem:case1}
Suppose that there exist $\bar{\theta}\in\Theta$, $z\in Y$ and $i\in I$ such that the following statements are fulfilled:
\begin{itemize}
\item[(i)] $w_{\bar{\theta}}$ is a contraction satisfying $w_{\bar{\theta}}(z)=z$;
\item[(ii)] $p_{\bar{\theta}}\,(y)>0$ for any $y\in Y$;
\item[(iii)] for every $n\in\n$, there is $(j_1,\ldots,j_{n})\in I^{n}$ with $j_n=i$ such that 
\begin{equation}
\label{e:iii}
\pi_{j_{k-1}j_k}(y)>0\quad\text{for all}\quad k\in\{1,\ldots,n\} \quad\text{and}\quad y\in w_{\bar{\theta}}(Y)\quad\text{with any}\quad j_0\in I.
\end{equation}
\end{itemize}
Then condition \hyperref[a]{(A)} holds with $\hat{y}=z$ and the given $i$.
\end{rem}
\begin{proof}
Fix $(y,j)\in Y$ and $\varepsilon>0$. Letting $K<1$ denote a Lipschitz constant of $w_{\bar{\theta}}$, we can choose $n\in\n$, $n>1$, so that $K^n\norma{y-z}<\varepsilon$. According to (iii), for such an integer $n$, we may find $(j_1,\ldots,j_n)\in I^n$ with $j_n=i$ such that \eqref{e:iii} is satisfied. Taking $\bj{n-1}:=(j_1,\ldots,j_{n-1})$, $\mathbf{0}:=(0,\ldots,0)\in\mathbb{R}_+^n$ and $\bt{n}:=(\bar{\theta},\ldots,\bar{\theta})\in\Theta^n$, we now see that
$$\norma{\mathcal{W}_n(y,(j,\bj{n-1}),\mathbf{0}, \bt{n})-z}=\norma{w_{\bar{\theta}}^n(y)-w_{\bar{\theta}}^n(z)}\leq K^n\norma{y-z}<\varepsilon,$$
and $\pp_n(y,(j,\bj{n-1}),\mathbf{0},\bt{n})\Pi_n(y,(j,\bj{n-1},i),\mathbf{0},\bt{n})>0$, due to (ii) and \eqref{e:iii}.
\end{proof}

\begin{rem}\label{rem:case2}
Suppose that condition \eqref{e:c1} holds with $\alpha<0$, and that \eqref{e:c4} is satisfied. Further, assume that there exist $k\in I$, $\bar{\theta}\in\Theta$ and $i\in I$ such that the following statements are fulfilled:
\begin{itemize}
\item[(i)]  $S_k(t,z)=z$ for all $t\geq 0$;
\item[(ii)] $w_{\bar{\theta}}$ is Lipschitz continuous;
\item[(iii)]$p_{\bar{\theta}}\,(y)>0$ for any $y\in Y$;
\item[(iv)] $\pi_{jk}(y)\pi_{ki}(y)>0$ for any $j\in I$ and any $y\in w_{\bar{\theta}}(Y)$.
\end{itemize}
Then condition \hyperref[a]{(A)} holds with $\hat{y}=w_{\bar{\theta}}(z)$ and the given $i$.
\end{rem}
\begin{proof}
Let $(y,j)\in X$ and $\varepsilon>0$. Further, choose $t>0$ so that $L_{\bar{\theta}}Le^{\alpha t}\norma{w_{\bar{\theta}}(y)-z}<\varepsilon$, where $L_{\bar{\theta}}$ stands for a Lipschitz constant of $w_{\bar{\theta}}$. Now, keeping in mind that $S_j(0,u)=u$ for any $u\in Y$ and applying (ii), (i), \eqref{e:c1}, sequentially, we infer that
\begin{align*}
\norma{\mathcal{W}_2(y,(j,k),(0,t),(\bar{\theta},\bar{\theta}))-w_{\bar{\theta}}(z)}&=\norma{w_{\bar{\theta}}(S_k(t,w_{\bar{\theta}}(y)))-w_{\bar{\theta}}(z)}\leq L_{\bar{\theta}}\norma{S_k(t,w_{\bar{\theta}}(y))-z}\\
&=L_{\bar{\theta}}\norma{S_k(t,w_{\bar{\theta}}(y))-S_k(t,z)}\leq L_{\bar{\theta}}Le^{\alpha t}\norma{w_{\bar{\theta}}(y)-z}<\varepsilon.
\end{align*}
Moreover, from (iii) and (iv) it follows that 
$$\mathcal{P}_2(y,(j,k),(0,t),(\bar{\theta},\bar{\theta}))=p_{\bar{\theta}}(y)p_{\bar{\theta}}(S_k(t,w_{\bar{\theta}}(y)))>0,$$
$$\Pi_2(y,(j,,k,i),(0,t),(\bar{\theta},\bar{\theta}))=\pi_{jk}(w_{\bar{\theta}}(y))\pi_{ki}(w_{\bar{\theta}}(S_k(t,w_{\bar{\theta}}(y))))>0.$$
\end{proof}
\begin{rem}\label{rem:case2a}
Note, that in the case where $\Theta$ is finite (and $\vartheta$ is the counting measure), condition~(ii) of Remark \ref{rem:case2} can be guaranteed by assuming condition \eqref{e:c4} and a~strengthened version of (iii), namely \hbox{$p:=\inf_{y\in Y} p_{\bar{\theta}}(y)>0$}. Under these settings, $w_{\bar{\theta}}$ is Lipschitz continuous with $L_{\bar{\theta}}=p^{-1}L_w$.  
\end{rem}
\pagebreak
The latter two observations prove to be useful in analysing the announced example, given below.
\begin{ex}
Let $\alpha<0$ and $a\in\mathbb{R}\backslash\{0\}$. Consider an instance of the dynamical system introduced in Section \ref{sec:2}, with $\Theta$ satisfying the assumptions of Section \ref{sec:42}, $Y:=\mathbb{R}$, $I:=\{1,2\}$ and two semiflows, defined by
$$S_1(t,y)=e^{\alpha t}y\quad\text{and}\quad S_2(t,y)=e^{\alpha t}(y-a)+a\quad\text{for}\quad t\in\mathbb{R}_+,\;y\in\mathbb{R}.$$
Furthermore, assume that conditions  \eqref{e:c3}-\eqref{e:c5} hold for the transformations $\{w_{\theta}:\,\theta\in\Theta\}$ and the probabilities $\{p_{\theta}: \,\theta\in\Theta\}$, $\{\pi_{ij}:\, i,j\in I\}$ with $L_w=1$, as well as that
\begin{equation}
\label{e:pos3}
\inf_{y\in \mathbb{R}}\pi_{ij}(y)>0\quad\text{and}\quad \inf_{y\in \mathbb{R}}p_{\theta}(y)>0\quad\text{for any}\quad i,j\in I,\;\theta\in\Theta.
\end{equation}
Obviously, the foregoing requirement is just a strengthened form of condition \eqref{e:c6}. It is also worth noting that \eqref{e:c3} holds, for example, if $\Theta$ is compact, and $w_{\theta}$, $p_{\theta}$ satisfy at least one of conditions (i) or (ii) from Remark \ref{rem:simple}.

Clearly, the semiflows $S_1$, $S_2$ satisfy conditions \eqref{e:c1}, \eqref{e:c2} with $\alpha<0$, $L=1$, $\mathcal{L}\equiv 1$, \hbox{$\varphi(t)=|a|(1-e^{\alpha t})$}, and inequality \eqref{e:c0} is then trivially fulfilled as well. Hence, due to \hbox{Theorem \ref{t:erg_p}}, the Markov operator~$P$, corresponding to the chain given by the post-jump locations, possesses a unique invariant probability measure $\mu_*$. What is more, due to Theorem \ref{thm:one-to-one}, $\nu_*:=G\mu_*$ is the unique invariant probability measure of the transition semigroup $\{P_t\}_{t \geq 0}$, associated with the corresponding PDMP.

Suppose now that all the transformations $y\mapsto w_{\theta}(y)$, $\theta\in\Theta$, and, if $\Theta$ is an interval, also $\theta \mapsto w_{\theta}(y)$, $y\in\mathbb{R}$, are continuously differentiable and \hbox{non-singular} with respect to $\ell_1$. Furthermore, assume that, for at least one $\bar{\theta}\in\Theta$, $w_{\bar{\theta}}(a)w_{\bar{\theta}}'(w_{\bar{\theta}}(a))\neq 0$, and that the transformation $w_{\bar{\theta}}$ is Lipschitz continuous. Plainly, in the case where $\Theta$ is finite, assuming the latter is unnecessary, since the Lipschitz continuity is assured by \eqref{e:c4} and \eqref{e:pos3} (due to Remark \ref{rem:case2a}). Under the aforesaid conditions, both the invariant measures $\mu_*$ and $\nu_*$ are absolutely continuous with respect to~$\bar{\ell}_1$. To see this, first observe that $S_2(t,a)=a$ for any $t\geq 0$. Then, due to Remark~\ref{rem:case2}, condition~\hyperref[a]{(A)} holds for $(\hat{y},i):=(w_{\bar{\theta}}(a),1)$. Moreover, we have 
$$\frac{\partial}{\partial t} \mathcal{W}_1(\hat{y},1,t,\bar{\theta})=\frac{\partial}{\partial t} w_{\bar{\theta}}(S_1(t,\hat{y}))=\alpha e^{\alpha t} w_{\bar{\theta}}(a)\, w_{\bar{\theta}}'(e^{\alpha t} w_{\bar{\theta}}(a))\neq 0$$
for small enough $t>0$, which ensures that \eqref{e:rank} is satisfied with $n=d=1$, $\hat{y}=w_{\bar{\theta}}(a)$, $i=1$, $\hat{\theta}_1=\bar{\theta}$ and some sufficiently small $\hat{t}_1>0$. Obviously, \eqref{e:pos} is also fulfilled, due to \eqref{e:pos3}. Consequently, in view of Corollary \ref{cor:main}, the measures $\mu_*$ and $\nu_*$ are absolutely continuous with respect to $\bar{\ell}_1$. 
\end{ex}

It is worth noting here that the assumptions of non-singularity of the transformations $w_{\theta}$, $S_k(t,\cdot)$ and the existence of a point $(\hat{y},i)$ for which \hyperref[a]{(A)} holds are not yet sufficient for the absolute continuity of the unique $P$-invariant measure, even though the hypotheses of Theorem ~\ref{t:erg_p} are fulfilled. In other words, conditions \eqref{e:rank} and \eqref{e:pos} in Theorem \ref{main:1} cannot be omitted. This assertion can be justified by the following simple example:
\begin{ex}\label{ex:2}
Let $Y:=\mathbb{R}$, $I:=\{1\}$, $\Theta:=\{1\}$, and suppose that
$$S_1(t,y)=e^{-t}y \quad\text{and}\quad w_1(y)=y \quad\text{for any}\quad y\in Y,\; t\geq 0.$$ 
In such a case, the state space $X=\mathbb{R}\times \{1\}$ of our dynamical system can be identified with $\mathbb{R}$, and the transition law of $\{\Phi_n\}_{n\in\n_0}$, given by \eqref{e:kernel}, takes the form 
$$P(y,A)=\int_0^{\infty} \lambda e^{-\lambda t} \mathbbm{1}_A(ye^{-t})\,dt\quad\text{for any}\quad y\in \mathbb{R},\; A\in\mathcal{B}(\mathbb{R}).$$ 
Obviously, conditions \eqref{e:c0}-\eqref{e:c6} hold in this setup (\eqref{e:c3} follows directly from Remark~\ref{rem:simple}), and thus, due to Theorem \ref{t:erg_p}, there exists a unique invariant measure for $P$. Moreover, note that $S_1(t,\cdot)$, $t\geq 0$, and $w_1$ are non-singular with respect to $\ell_1$, and that condition \hyperref[a]{(A)} is fulfilled for $(\hat{y},i):=(0,1)$, since $S_1(t,0)=0$ and $w_1(0)=0$ (cf. Remark \ref{rem:case2}). On the other hand, it is easily seen that the unique $P$-invariant measure is $\delta_0$, which is singular with respect to $\ell_1$.
\end{ex}

\section*{Acknowledgements}
The work of Hanna Wojew\'odka-\'Sci\k{a}\.zko has been partly supported by the National Science Centre of Poland, grant number 2018/02/X/ST1/01518.

%By virtue of the inverse function theorem, for any $\bold{t}^{n-d}\in V_{\hat{\bold{t}}^{n-d}}$ there is an open neighbourhood $V_{\bbs{d}}(\bold{t}^{n-d})\subset \mathbb{R}_+^d$ of $\bbs{d}$ so that the map $$V_{\bbs{d}}(\bold{t}^{n-d})\ni \bs{d}\mapsto \mathcal{W}_n(y,i,\bbj{n-1},(\bs{d},\hat{\bold{t}}^{n-d}),\bbt{n}):=\mathcal{R}(\bs{d})$$ is invertible, continuously differentiable wit.h respect to each $t_k$, $k=1, \ldots,d$, and its range $\mathcal{R}(V_{\bbs{d}}(\bold{t}^{n-d}))\subset \mathbb{R}^d$ is an open set containing $\hat{w}$. Hence we may find an open neighbourhood $V_y(\bold{t}^{n-d})$ of $y$ such that $$V_y(\bold{t}^{n-d})\subset \mathcal{R}(V_{\bbs{d}}(\bold{t}^{n-d})).$$

%Let us start with a definition that will prove useful to encode the paths of the process 

\bibliographystyle{plain}
\bibliography{References}
\end{document}